\newtheorem{theorem}{Theorem}[section]
\newtheorem{corollary}[theorem]{Corollary}
\newtheorem{lemma}[theorem]{Lemma}
\newtheorem{proposition}[theorem]{Proposition}
\newtheorem{Definition}[theorem]{Definition}
\newtheorem{Example}[theorem]{Example}
\newtheorem{Remark}[theorem]{Remark}
\newtheorem{problem}[theorem]{Problem}
\numberwithin{equation}{section}
\newenvironment{remark}{\begin{Remark}\begin{em}}{\end{em}\end{Remark}}
\newenvironment{definition}{\begin{Definition}\begin{em}}{\end{em}\end{Definition}}
\newcommand{\Pro}{\mathcal P}
\def\cQ{\mathcal{Q}}
\def\bP{\mathbb{P}}
\def\cP{\mathcal{P}}
\def\bH{\mathbb{H}}
\def\<{\langle}
\def\>{\rangle}
\def\bN{\mathbb{N}}
\DeclareMathOperator{\ua}{\uparrow\!}
\DeclareMathOperator{\tr}{tr}
\DeclareMathOperator{\argmin}{\mathrm{arg\, min}}
\begin{document}
\allowdisplaybreaks

\title[Log-majorization and Lie-Trotter formula]{Log-majorization and
Lie-Trotter formula for the Cartan barycenter on probability measure spaces}
\author[Hiai and Lim]{Fumio Hiai and Yongdo Lim}
\address{Tohoku University (Emeritus), Hakusan 3-8-16-303, Abiko 270-1154, Japan}\email{hiai.fumio@gmail.com}
\address{Department of Mathematics, Sungkyunkwan University, Suwon 440-746, Korea} \email{ylim@skku.edu}
\date{\today}
\maketitle

\begin{abstract}

We extend Ando-Hiai's log-majorization for the weighted geometric
mean of positive definite matrices into that for the Cartan
barycenter in the general setting of probability measures on the
Riemannian manifold of positive definite matrices equipped with
trace metric. The main key is the settlement of the
monotonicity problem of the Cartan barycenteric map on the space of
probability measures with finite first moment for the stochastic
order induced by the cone. We also derive a version of Lie-Trotter
formula and related unitarily invariant norm inequalities for the
Cartan barycenter as the main application of log-majorization.
\end{abstract}

\medskip
\noindent \textit{2010 Mathematics Subject Classification}. 15A42,
47A64, 47B65, 47L07

\noindent \textit{Key words and phrases.} Positive definite matrix,
Cartan barycenter, Wasserstein distance, log-majorization,
Lie-Trotter formula, unitarily invariant norm

\section{Introduction}
Let $A$ be an $m\times m$ positive definite matrix  with eigenvalues
$\lambda_{j}(A)$, $1\le j\le m$, arranged in decreasing order, i.e.,
$\lambda_{1}(A)\geq \cdots\geq \lambda_{m}(A)$ with counting multiplicities.
The {\it log-majorization} $A\underset{\log}{\prec} B$ between  positive definite
matrices $A$ and $B$ is defined if
$$\prod_{i=1}^{k}\lambda_{i}(A)  \leq \prod_{i=1}^{k}\lambda_{i}(B) \quad\mbox{for }
1\le k\le m-1,\mbox{ and } \det A  = \det B.
$$
The log-majorization gives  rise to powerful devices in deriving
various norm inequalities and has many important applications in
operator means, operator monotone functions, statisticalmechanics,
quantum information theory, eigenvalue analysis, etc., see, e.g.,
{\cite{BLP,BG,HiP1}}. For instance,
$A\underset{\log}{\prec} B$ implies $||| A|||\leq |||B|||$
for all unitarily invariant norms $|||\cdot |||$.

As a complementary counterpart of the Golden-Thompson trace inequality,
Ando and Hiai \cite{AH} established the log-majorization on the matrix
geometric mean of two positive definite matrices: for positive
definite matrices $A,B$ and $0\le\alpha\leq 1$,
\begin{eqnarray*}
A^{t}\#_{\alpha}B^{t}\underset{\log}{\prec} (A\#_{\alpha} B)^{t},
\qquad t\geq 1,
\end{eqnarray*}
where $A\#_{\alpha}B:=A^{1/2}(A^{-1/2}BA^{-1/2})^{\alpha}A^{1/2}$,
the {\it $\alpha$-weighted geometric mean} of $A$ and $B$.
This provides various norm inequalities for unitarily invariant norms via
the Lie-Trotter formula $
\lim_{t\to0}(A^{t}\#_{\alpha}B^{t})^{\frac{1}{t}}=e^{(1-\alpha)\log
A+\alpha \log B}$. For instance,
$|||(A^{t}\#_{\alpha}B^{t})^{\frac{1}{t}}|||$ increases to
$|||e^{(1-\alpha)\log A+\alpha \log B}|||$ as $r\searrow 0$ for any
unitarily invariant norm. Ando-Hiai's log-majorization has many
important applications in matrix analysis and inequalities, together
with Araki's log-majorization \cite{Ar} extending the Lieb-Thirring
and the Golden-Thompson trace inequalities.

The matrix geometric mean $A\#_{\alpha}B$, that plays the central
role in Ando-Hiai's log-majorization, appears as the unique (up to
parametrization) geodesic curve $\alpha\in[0,1]\mapsto
A\#_{\alpha}B$ between $A$ and $B$ on the Riemannian manifold ${\Bbb
P}_{m}$ of positive definite matrices of size $m$, an important
example of Cartan-Hadamard Riemannian manifolds. Alternatively, the
geometric mean $A\#_{\alpha}B$ is the Cartan barycenter of the
finitely supported measure $(1-\alpha)\delta_{A}+\alpha \delta_{B}$
on ${\Bbb P}_{m}$, which is defined as the unique minimizer of the
least squares problem with respect to the Riemannian distance $d$
(see Section 2 for definition). Indeed, for a general probability
measure $\mu$ on $\bP_m$ with finite first moment, the Cartan
barycenter of $\mu$ is defined as the unique minimizer as follows:
\begin{displaymath}
G(\mu) := \underset{Z \in \mathbb{P}_m}{\argmin} \int_{\mathbb{P}_m}
\bigl[d^{2} (Z, X)-d^2(Y,X)\bigr]d\mu(X)
\end{displaymath}
(see Section 2 for more details).
In particular, when $\mu=\sum_{j=1}^nw_j\delta_{A_j}$ is a discrete probability measure
supported on a finite number of $A_1,\dots,A_n\in\bP_m$, the Cartan
barycenter $G(\mu)$ is the {\it Karcher mean} of $A_1,\dots,A_n$, which has extensively
been discussed in these years by many authors as a multivariable extension of the
geometric mean (see \cite{BH,LL1,Ya1} and references therein).

The first aim of this paper is to establish the log-majorization
(Theorem \ref{T:MAIN}) for the Cartan barycenter in the general
setting of probability measures in the Wasserstein space ${\mathcal
P}^{1}({\Bbb P}_{m})$, the probability measures on $\bP_m$ with
finite first moment. In this way, we first establish the monotonicity
of the Cartan barycenteric map on ${\mathcal P}^{1}({\Bbb P}_{m})$ for
the stochastic order induced by the cone of positive semidefinite matrices,
and then generalize the log-majorization in \cite{AH} (as mentioned above)
and in \cite{HiP} (for the Karcher mean of multivariables) to the setting of
probability measures. Our second aim is to derive the Lie-Trotter
formula (Theorem \ref{T8}) for the Cartan barycenter
\begin{equation*}
\lim_{t\to0}G(\mu^t)^{1\over t}=\exp\int_{\bP_m}\log A\,d\mu(A)
\end{equation*}
under a certain integrability assumption on $\mu$, where $\mu^{t}$
is the $t$th  power of the measure $\mu$ inherited from the matrix
powers on ${\Bbb P}_{m}$. Moreover, to demonstrate the usefulness of
our log-majorization, we obtain several unitarily invariant norm
inequalities (Corollary \ref{C9}) based on the above Lie-Trotter
formula.

The main tools of the paper involve the theory of nonpositively
curved metric spaces and techniques from probability measures on
metric spaces  and the recent combination of the two (see
\cite{St03,AGS,Vi1}).  Not only are these tools crucial for our
developments, but also, we believe, significantly enhance the
potential usefulness of the Cartan barycenter of probability
measures in matrix analysis and inequalities. They overcome the
limitation to the multivariable (finite number of matrices) setting,
and provide a new bridge between two different important fields of
studies of matrix analysis and probability measure theory on
nonpositively curved metric spaces.

\section{Cartan barycenters}

Let ${\Bbb H}_{m}$ be the Euclidean space of $m \times m$ Hermitian
matrices equipped with the inner product $\langle X,Y \rangle :=
{\mathrm{tr}}(XY)$. The {\it Frobenius norm} $\|\cdot\|_{2}$ defined
by $\|X\|_{2} = (\tr X^{2})^{1/2}$ for $X \in {\Bbb H}_m$ gives rise
to the Riemannian structure on the open convex cone ${\Bbb P}_{m}$
of $m\times m$ positive definite matrices with the metric
\begin{equation}\label{metric}
\langle X,Y\rangle_{A} := {\mathrm{tr}}(A^{-1} X A^{-1} Y), \qquad
A\in\bP_m,\ X,Y\in\bH_m,
\end{equation}
where the tangent space of $\bP_m$ at any point $A\in\bP_m$ is
identified with $\bH_m$. The Riemannian exponential
at $A \in {\Bbb P}_{m}$ is given by
\begin{eqnarray*}\exp_{A}(X) =
A^{\frac{1}{2}}\exp(A^{-\frac{1}{2}}XA^{-\frac{1}{2}})A^{\frac{1}{2}}\end{eqnarray*}
and its inverse is
\begin{eqnarray*}\log_{A}(X) =
A^{\frac{1}{2}}\log(A^{-\frac{1}{2}}XA^{-\frac{1}{2}})A^{\frac{1}{2}}.\end{eqnarray*}
Then ${\Bbb P}_m$ is a {\it Cartan-Hadamard Riemannian manifold}, a
simply connected complete Riemannian manifold with nonpositive
sectional curvature (the canonical $2$-tensor is nonnegative). The
{\it Riemannian trace metric} (i.e., the geodesic distance with
respect to \eqref{metric}) on ${\Bbb P}_m$ is given by
$$
d(A,B) := \big\| \log A^{-\frac{1}{2}} B A^{-\frac{1}{2}} \big\|_{2},
$$
and the unique (up to parametrization) geodesic shortest curve
joining $A$ and $B$ is $t\in[0,1] \mapsto A \#_{t} B =
A^{\frac{1}{2}}(A^{-\frac{1}{2}}BA^{-\frac{1}{2}})^{t}A^{\frac{1}{2}}$.
The nonpositively curved property is equivalently stated as
\begin{eqnarray}\label{NP}d^{2}(A\#_{t}B,C)&\leq&
(1-t)d^{2}(A,C)+td^{2}(B,C)-(1-t)td^{2}(A,B).
\end{eqnarray}
See \cite{LL01,Bh} for more about these Riemannian structures.

Let $\mathcal{B}={\mathcal B}({\Bbb P}_m)$ be the algebra of Borel
sets, the smallest $\sigma$-algebra containing the open sets of ${\Bbb P}_m$. We note that
the Euclidean topology on ${\Bbb P}_m$ coincides with the metric topology of the trace metric
$d$. Let ${\mathcal P}={\mathcal P}(\bP_m)$ be the set of all probability measures on
$({\Bbb P}_m, {\mathcal B})$ and ${\mathcal P}_c={\mathcal P}_c(\bP_m)$ the set of
all compactly supported $\mu\in{\mathcal P}$. Let ${\mathcal P}_{0}={\mathcal P}_{0}(\bP_m)$
be the set of all $\mu \in {\mathcal P}$ of the form
$\mu = (1/n) \sum_{j=1}^{n} \delta_{A_{j}}$, where $\delta_A$ is the point measure of mass
$1$ at $A \in {\Bbb P}$. For $p\in[1,\infty)$ let ${\mathcal P}^{p}={\mathcal P}^{p}(\bP_m)$
be the set of probability measures with \emph{finite $p$-moment}, i.e., for some (and hence
all) $Y\in {\Bbb P}_m$,
$$ \int_{ {\Bbb P}_m} d^p(X,Y)\,d\mu(X) < \infty. $$

We say that $\omega \in \Pro({\Bbb P}_m
\times {\Bbb P}_m)$ is a \emph{coupling} for $\mu,\nu \in \Pro$ if
 $\mu,\nu$ are the marginals of $\omega$, i.e.,  if for all $B \in\mathcal{B}$,
$ \omega(B \times {\Bbb P}_m) = \mu(B)$ and  $\omega({\Bbb P}_m \times
B) = \nu(B)$. We note that one such coupling is the product measure
$\mu \times \nu$.  We denote the set of all couplings for $\mu,\nu
\in \Pro(\bP_m)$ by $\Pi(\mu,\nu)$.

The $p$-\emph{Wasserstein distance} $d_{p}^W$ on ${\mathcal P}^{p}$ is defined by
$$ d_{p}^{W}(\mu,\nu) := \left[ \inf_{\pi \in \Pi(\mu,\nu)}
\int_{{\Bbb P}_m \times {\Bbb P}_m} d^p(X,Y)\,d\pi(X,Y) \right]^{\frac{1}{p}}. $$
It is known that $d_{p}^W$ is a complete metric on ${\mathcal P}^{p}$ and
${\mathcal P}_{0}$ is dense in ${\mathcal P}^{p}$ \cite{St03}.
Note that $\mathcal{P}_{0} \subset \mathcal{P}_c \subset \mathcal{P}^{q} \subset
\mathcal{P}^{p} \subset \mathcal{P}^{1}$ and  $d^{W}_{p} \leq d^{W}_{q}$ for
$1 \leq p \leq q < \infty$.
We note that these basic results on probability measure spaces hold
in general setting of complete metric spaces in which cases
separability assumption is necessary.

The following result on Lipschitz property of push-forward maps between metric spaces
appears in \cite{LL5}, where $X,Y$ are metric spaces and the
distance $d_p^W$ on $\cP^p(X),\cP^p(Y)$ are defined as above.

\begin{lemma}\label{L:lip}
Let $f:X\to Y$ be a Lipschitz map with Lipschitz constant $C$.  Then
the push-forward map  $f_*:\mathcal{P}^p(X)\to \mathcal{P}^p(Y)$,
$f_{*}(\mu)=\mu\circ f^{-1}$, is Lipschitz with respect to $d_p^W$
with Lipschitz constant $C$ for $1\leq p<\infty$.
\end{lemma}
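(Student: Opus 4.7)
The plan is the standard optimal-transport coupling argument. Given $\mu,\nu \in \mathcal{P}^p(X)$ and any coupling $\pi \in \Pi(\mu,\nu)$, I would push $\pi$ forward through the product map $F := f \times f \colon X \times X \to Y \times Y$, $F(x_1,x_2) := (f(x_1),f(x_2))$, to obtain a candidate coupling $F_*\pi$ on $Y \times Y$. The first step is to verify that $F_*\pi \in \Pi(f_*\mu, f_*\nu)$: for any Borel $B \subset Y$, the marginal computation
\[ (F_*\pi)(B \times Y) = \pi\bigl(F^{-1}(B \times Y)\bigr) = \pi(f^{-1}(B) \times X) = \mu(f^{-1}(B)) = (f_*\mu)(B), \]
together with its symmetric counterpart, settles this; note that $f^{-1}(B)$ is Borel because $f$ is continuous (being Lipschitz).

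Before invoking this coupling I would also verify $f_*\mu, f_*\nu \in \mathcal{P}^p(Y)$, so that $d_p^W(f_*\mu, f_*\nu)$ is even defined. Fixing any $x_0 \in X$ and setting $y_0 := f(x_0)$, the change-of-variables formula yields
\[ \int_Y d_Y^p(y,y_0)\,d(f_*\mu)(y) = \int_X d_Y^p(f(x),f(x_0))\,d\mu(x) \le C^p \int_X d_X^p(x,x_0)\,d\mu(x), \]
which is finite by the hypothesis $\mu \in \mathcal{P}^p(X)$, and likewise for $\nu$.

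The main estimate is then immediate. By the same change of variables and the Lipschitz hypothesis applied pointwise,
\[ \int_{Y \times Y} d_Y^p(y_1,y_2)\,d(F_*\pi) = \int_{X \times X} d_Y^p(f(x_1),f(x_2))\,d\pi \le C^p \int_{X \times X} d_X^p(x_1,x_2)\,d\pi. \]
Since $F_*\pi \in \Pi(f_*\mu,f_*\nu)$, the left-hand side is an upper bound for $d_p^W(f_*\mu,f_*\nu)^p$; taking the infimum of the right-hand side over $\pi \in \Pi(\mu,\nu)$ delivers $d_p^W(f_*\mu,f_*\nu) \le C\,d_p^W(\mu,\nu)$, which is the claim. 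I do not foresee any substantive obstacle: the argument reduces to functoriality of the pushforward under couplings combined with the pointwise Lipschitz bound on the integrand, so no measurable-selection theorem, no separability assumption, and no Kantorovich duality is needed.
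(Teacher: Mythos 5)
Your argument is correct: pushing a coupling forward through $f\times f$ yields a coupling of the image measures, and the pointwise Lipschitz bound on the integrand gives the estimate after taking the infimum; the preliminary checks (marginals, finiteness of $p$-moments of $f_*\mu$) are all in order. The paper itself imports this lemma from \cite{LL5} without proof, and your coupling/pushforward argument is precisely the standard proof given there, so there is nothing to reconcile.
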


\begin{definition}
The {\it Cartan barycenter} map $G: {\mathcal P}^1({\Bbb P}_m)\to {\Bbb P}_m$ is
defined by
\begin{displaymath}
G(\mu) := \underset{Z \in \mathbb{P}_m}{\argmin} \int_{\mathbb{P}_m}
\bigl[d^{2} (Z, X)-d^2(Y,X)\bigr]\,d\mu(X),\qquad\mu\in\cP^1(\bP_m)
\end{displaymath}
for a fixed $Y$. The uniqueness and existence of the minimizer is well-known and the
unique minimizer is independent of $Y$ (see \cite[Proposition 4.3]{St03}). On
${\mathcal P}^2({\Bbb P}_m)$, the Cartan barycenter is determined by
\begin{displaymath}
G(\mu) = \underset{Z \in \mathbb{P}_m}{\argmin} \int_{\mathbb{P}_m}
d^{2} (Z, X)\,d\mu(X).
\end{displaymath}
For a discrete measure $\mu=\sum_{j=1}^{n}w_j\delta_{A_{j}}$, $G(\mu)$ is the
Karcher mean of $A_1,\dots,A_n$ with a weight $(w_1,\dots,w_n)$, see, e.g., \cite{LL1,Ya1}.
\end{definition}

The following contraction property appears in \cite{St03}.

\begin{theorem}[Fundamental Contraction Property] \label{T:ft}
For every $\mu,\nu \in {\mathcal P}^p(\bP_m)$, $p\ge1$,
$$ d(G(\mu),G(\nu)) \leq d_{1}^{W}(\mu,\nu)\leq d_p^W(\mu,\nu). $$
\end{theorem}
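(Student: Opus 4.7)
The plan is to establish the two inequalities separately. The right-hand bound $d_1^W(\mu,\nu) \leq d_p^W(\mu,\nu)$ is elementary: for any coupling $\pi \in \Pi(\mu,\nu)$, Jensen's (H\"older's) inequality applied to the convex function $t\mapsto t^p$ on $[0,\infty)$ yields
$$\int_{\bP_m\times\bP_m} d(X,Y)\,d\pi(X,Y) \leq \biggl[\int_{\bP_m\times\bP_m} d^p(X,Y)\,d\pi(X,Y)\biggr]^{1/p},$$
and taking the infimum over $\pi$ gives the inequality.

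The main content is the contraction $d(G(\mu), G(\nu)) \leq d_1^W(\mu,\nu)$. From the NPC inequality (\ref{NP}), applied twice at the endpoints of two geodesics, I would first extract the CAT(0) two-point contraction
$$d(A\#_t B,\, C\#_t D) \leq (1-t)\,d(A,C) + t\,d(B,D),\qquad t\in[0,1].$$
Given any coupling $\pi\in\Pi(\mu,\nu)$, draw i.i.d.\ samples $(X_i,Y_i)_{i\ge 1}$ from $\pi$ and form Sturm's inductive means $S_1^X := X_1$, $S_k^X := S_{k-1}^X \#_{1/k} X_k$, and analogously $S_k^Y$. A straightforward induction on $n$ using the two-point contraction yields
$$d(S_n^X, S_n^Y) \leq \frac{1}{n}\sum_{i=1}^n d(X_i, Y_i).$$

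Sturm's strong law of large numbers on the NPC space $\bP_m$ then gives $S_n^X \to G(\mu)$ and $S_n^Y \to G(\nu)$ almost surely, while the classical real-valued strong law applied to the integrable sequence $d(X_i,Y_i)$ (integrability holding because $\mu,\nu\in\mathcal{P}^1$) shows that $\frac{1}{n}\sum_i d(X_i,Y_i) \to \int d(X,Y)\,d\pi(X,Y)$ almost surely. Passing to the limit gives $d(G(\mu), G(\nu)) \leq \int d\,d\pi$, and minimizing over $\pi$ produces the claim. The main obstacle is that one cannot short-circuit the argument via continuity of $G$ on $(\mathcal{P}^1, d_1^W)$, since that continuity is essentially what is being proved; the inductive-mean/LLN strategy is the standard device that circumvents this circularity. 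A secondary technicality, namely that Sturm's NPC LLN is typically stated under a finite second-moment assumption, is handled by approximating $\mu,\nu$ by compactly supported measures (all of which lie in $\mathcal{P}^2$) and propagating the inequality by a Fatou-type limiting argument at the level of the i.i.d.\ samples, or equivalently by a truncation of the integrands $d^2(Z,X)-d^2(Y,X)$ that defines $G$ on $\mathcal{P}^1$.
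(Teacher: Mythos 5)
The paper offers no proof of this theorem at all: it is quoted from Sturm \cite{St03}, so there is no in-paper argument to compare against. Your proposal essentially reconstructs Sturm's original proof, and it is correct in outline. The reduction $d_1^W\le d_p^W$ via Jensen's inequality applied couplingwise is fine; the two-point contraction $d(A\#_tB,C\#_tD)\le(1-t)d(A,C)+t\,d(B,D)$ does follow from two applications of \eqref{NP} (convexity of the metric along pairs of geodesics in an NPC space); the induction on Sturm's inductive means and the combination of the NPC law of large numbers with the classical scalar law of large numbers for $d(X_i,Y_i)$ is exactly the standard device, and your observation that one cannot instead invoke continuity of $G$ (since that is what is being proved) is on point. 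The one soft spot is your fallback for the moment issue: if you prove the contraction only for compactly supported measures and then approximate, passing to the limit in $d(G(\mu_n),G(\nu_n))\le d_1^W(\mu_n,\nu_n)$ requires $G(\mu_n)\to G(\mu)$, which is precisely the continuity you just declared off-limits, so the ``Fatou-type limiting argument'' as stated reintroduces the circularity. This is repairable in two ways: either note that Sturm's law of large numbers for inductive means is in fact available for distributions in $\mathcal{P}^1$ (no second-moment hypothesis is needed, so the truncation step can be dropped entirely), or, for the specific truncation sequence, verify $G(\mu_n)\to G(\mu)$ directly from the Karcher equation of Theorem \ref{T:kare} together with the uniform convexity of the defining functional, independently of the contraction property. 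With that point settled, the argument is complete.
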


\section{Karcher equations and monotonicity}
A map $g:{\Bbb P}_{m}\to {\Bbb R}$ is called \emph{uniformly convex}
if there is a strictly increasing function $\phi:[0,\infty)\to
[0,\infty)$ such that
$$g(A\#B)\leq \frac{1}{2}(g(A)+g(B))-\phi(\delta(A,B))$$ for all
$A,B\in {\Bbb P}_{m}$. For a continuous uniformly convex function
$g$, it has a unique minimizer of $g$ (see \cite{St03}) and
coincides with the unique point that vanishes the (either Riemannian
or Euclidean) gradient, whenever it is differentiable, see
\cite{LL13}.

By (\ref{NP}), the map
$$Z\mapsto \int_{\mathbb{P}_m}\bigl[d^{2} (Z,
X)-d^2(Y,X)\bigr]\,d\mu(X),\qquad\mu\in\cP^1(\bP_m)$$ is uniformly
convex. The next theorem is a characterization of $G(\mu)$ in terms
of the unique solution to the Karcher equation.

\begin{theorem}\label{T:kare}
For every $\mu\in\cP^1(\bP_m)$, $G(\mu)$ is the unique solution
$Z\in\bP_m$ to the Karcher equation
$$
\int_{\bP_m}\log Z^{-1/2}XZ^{-1/2}\,d\mu(X)=0.
$$
\end{theorem}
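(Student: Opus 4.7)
The strategy is to combine the variational characterization of $G(\mu)$ with the first-order optimality condition for the underlying functional. Set
$$F(Z):=\int_{\bP_m}\bigl[d^{2}(Z,X)-d^2(Y,X)\bigr]\,d\mu(X),\qquad Z\in\bP_m,$$
with $Y\in\bP_m$ fixed. By the NPC inequality \eqref{NP}, $F$ is continuous and uniformly convex, and by definition its unique minimizer is $G(\mu)$. As remarked just before the statement, for a continuous uniformly convex differentiable function on $\bP_m$ the minimizer is also the unique point at which the Riemannian gradient vanishes, so it suffices to compute $\nabla F$ explicitly and identify the zero-gradient condition with the Karcher equation.

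First I would check that $F$ is finite on $\bP_m$. The bound
$$|d^2(Z,X)-d^2(Y,X)|\leq d(Z,Y)\bigl[d(Z,X)+d(Y,X)\bigr]\leq d(Z,Y)\bigl[d(Z,Y)+2d(Y,X)\bigr]$$
places the integrand in $L^1(\mu)$ because $\mu\in\cP^1(\bP_m)$. Next I would compute $\nabla F$ via the well-known Cartan--Hadamard identity $\nabla_Z d^{2}(Z,X)=-2\log_Z X$ combined with the explicit Riemannian logarithm $\log_Z X = Z^{1/2}\log(Z^{-1/2}XZ^{-1/2})Z^{1/2}$ recalled in Section~2. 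Interchanging gradient and integral is legitimate because, on any bounded neighbourhood of a given base point $Z_0\in\bP_m$, the tangent-vector norm $\|\log_Z X\|_Z=d(Z,X)\leq d(Z,Y)+d(Y,X)$ is dominated by a $\mu$-integrable function. This yields
$$\nabla F(Z)=-2Z^{1/2}\left(\int_{\bP_m}\log(Z^{-1/2}XZ^{-1/2})\,d\mu(X)\right)Z^{1/2}.$$
Since $Z^{1/2}$ is invertible, $\nabla F(Z)=0$ is equivalent to
$$\int_{\bP_m}\log(Z^{-1/2}XZ^{-1/2})\,d\mu(X)=0,$$
which is precisely the Karcher equation. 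Uniform convexity of $F$ then guarantees that this equation has a unique solution in $\bP_m$, namely $G(\mu)$.

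The main difficulty will be the careful justification of differentiation under the integral in the $\cP^1$ setting, where $d^2(\cdot,X)$ is not itself $\mu$-integrable; the regularization by $-d^2(Y,X)$ compensates for this, but one must produce an integrable majorant for the difference quotient uniformly in a neighborhood of each $Z$. If a direct approach proves awkward, an alternative is to first establish the theorem for $\mu\in\cP_c$, where everything is finite and smooth by compactness, and then approximate a general $\mu\in\cP^1$ in $d_1^W$ by measures in $\cP_c$ (using density of $\cP_0$) and pass to the limit via the Fundamental Contraction Property (Theorem~\ref{T:ft}), identifying the limit of the approximating barycenters as the unique solution of the limit Karcher equation.
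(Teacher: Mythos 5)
Your proposal is correct and follows essentially the same route as the paper: identify $G(\mu)$ as the unique minimizer of the uniformly convex functional $Z\mapsto\int[d^2(Z,X)-d^2(Y,X)]\,d\mu(X)$, compute its gradient by differentiating under the integral (justified by an integrable majorant coming from $\mu\in\cP^1$), and observe that the vanishing of the gradient is exactly the Karcher equation. The only cosmetic difference is that you work with the Riemannian gradient $-2\log_Z X$ while the paper computes the Euclidean one, but these differ only by conjugation with $Z^{\pm1/2}$ and vanish simultaneously, as the paper itself notes.
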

\begin{proof} Let $\mu\in\cP^1(\bP_m)$.
We first show that the Euclidean gradient of the function
$$
Z\in\bP_m\ \longmapsto\
\varphi(Z):=\int_{\bP_m}\bigl[d^2(Z,X)-d^2(Y,X)\bigr]\,d\mu(X)
$$
is
$$
Z^{-1/2}\biggl(\int_{\bP_m}\log Z^{1/2}X^{-1}Z^{1/2}\,d\mu(X)\biggr)Z^{-1/2}.
$$
More precisely, with
$$
F(Z,X):=2Z^{-1/2}\bigl(\log
Z^{1/2}X^{-1}Z^{1/2}\bigr)Z^{-1/2},\qquad Z,X\in\bP_m,
$$
we shall prove that
\begin{equation}\label{F-3.1}
\varphi(Z+H)=\varphi(Z)+\int_{\bP_m}\tr F(Z,X)H\,d\mu(X)+o(\|H\|_2)
\end{equation}
as $\|H\|_2\to0$ for $H\in\bH_m$.

For each fixed $X\in\bP_m$, let
$$
\psi(Z):=d^2(Z,X)=\tr\bigl(\log X^{-1/2}ZX^{-1/2}\bigr)^2,\qquad
Z\in\bP_m.
$$
It is not difficult to compute the gradient of $\psi(Z)$ is
$F(Z,X)$, i.e.,
$$
\psi(Z+H)=\psi(Z)+\tr F(Z,X)H+o(\|H\|_2)
$$
as $\|H\|_2\to0$ for $H\in\bH_m$. Then, for every $Z\in\bP_m$ and
$H\in\bH_m$, by the Lebesgue convergence theorem one can prove that
\begin{align*}
{d\over dt}\,\varphi(Z+tH)\Big|_{t=0}
&=\lim_{t\to0}{\varphi(Z+tH)-\varphi(Z)\over t} \\
&=\lim_{t\to0}\int_{\bP_m}{\psi(Z+tH)-\psi(Z,X)\over t}\,d\mu(X) \\
&=\int_{\bP_m}\tr F(Z,X)H\,d\mu(X).
\end{align*}
This formula for the directional derivative is enough to give
\eqref{F-3.1} (due to the finite dimensionality).
\end{proof}

For any ${\mathcal U}\subset {\Bbb P}_{m}$, we define ${\mathcal
U}^{\ua} = \{B\in {\Bbb P}_{m} : A \leq B \ {\mathrm{for\ some}}\ A
\in {\mathcal U}\}$. A set ${\mathcal U}$ is an \emph{upper set} if
${\mathcal U}^{\ua}={\mathcal U}$. For $\mu,\nu\in \mathcal{P}({\Bbb
P}_{m})$, we define $\mu\leq \nu$ if $\mu({\mathcal U})\leq
\nu({\mathcal U})$ for all open upper sets $\mathcal U$. This
partial order on $\mathcal{P}({\Bbb P}_{m})$ is a natural extension
of the usual one; $A_{j}\leq B_{\sigma(j)}$ for some permutation
$\sigma$ and $j=1,\dots, n$ if and only if
$(1/n)\sum_{j=1}^{n}\delta_{A_{j}}\leq
(1/n)\sum_{j=1}^{n}\delta_{B_{j}}$, as seen from the marriage
theorem.

We recall the well-known  L\"owner-Heinz
inequality:
$$0<A\leq B \ \ \  {\mathrm{implies}} \ \ \ A^{t}\leq B^{t}, \ \ t\in [0,1].$$
The next theorem is the monotonicity property of the Cartan
barycenter $G$ on $\cP^1(\bP_m)$ and extends the recent works of
Lawson-Lim \cite{LL1} and Bhatia-Karandikar \cite{BK} on the space
of finitely (and uniformly) supported measures, which can be viewed
as a multivariate L\"owner-Heinz inequality.

\begin{theorem}\label{T:mono}
Let $\mu,\nu\in\cP^1(\bP_m)$. If $\mu\le\nu$, then $G(\mu)\le
G(\nu)$.
\end{theorem}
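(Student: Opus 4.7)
The plan is to reduce to the finitely supported case via two classical tools: Strassen's theorem (to realize the stochastic order by a pathwise coupling) and the convergence of empirical measures in Wasserstein distance (to approximate measures in $\cP^1(\bP_m)$ by uniformly weighted discrete measures preserving the order).

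First, I would apply Strassen's theorem. Since $\bP_m$ is a Polish space and the Loewner order $\le$ is closed, the assumption $\mu\le\nu$ in the stochastic order of $\cP(\bP_m)$ is equivalent to the existence of a coupling $\omega\in\Pi(\mu,\nu)$ concentrated on the closed set $K:=\{(X,Y)\in\bP_m\times\bP_m:X\le Y\}$. Because $\mu,\nu\in\cP^1(\bP_m)$, this coupling has finite first moment on the product space (with its canonical product metric).

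Second, I would approximate $\omega$ by empirical measures. Drawing an i.i.d.\ sequence $(X_j,Y_j)_{j\ge1}$ from $\omega$, set
\[
\omega_n:={1\over n}\sum_{j=1}^n\delta_{(X_j,Y_j)},\qquad \mu_n:={1\over n}\sum_{j=1}^n\delta_{X_j},\qquad \nu_n:={1\over n}\sum_{j=1}^n\delta_{Y_j}.
\]
By the standard Varadarajan/Glivenko--Cantelli-type convergence of empirical measures in $d_1^W$ (which uses only the finite first moment of $\omega$), $\omega_n\to\omega$ in $d_1^W$ almost surely. Applying Lemma \ref{L:lip} to the two coordinate projections (which are $1$-Lipschitz), we get $\mu_n\to\mu$ and $\nu_n\to\nu$ in $d_1^W$ almost surely. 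Since $\omega$ is supported in $K$, each sampled pair satisfies $X_j\le Y_j$; by the marriage-theorem characterization recalled in the paper (with $\sigma$ the identity permutation), $\mu_n\le\nu_n$ as elements of $\cP_0(\bP_m)$.

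Third, I would invoke the already-proved finite case: by Lawson--Lim \cite{LL1} and Bhatia--Karandikar \cite{BK}, monotonicity of the Karcher mean for finitely and uniformly supported measures gives $G(\mu_n)\le G(\nu_n)$ for every $n$. Finally, the Fundamental Contraction Property (Theorem \ref{T:ft}) yields $d(G(\mu_n),G(\mu))\le d_1^W(\mu_n,\mu)\to0$ and similarly for $\nu_n$, so $G(\mu_n)\to G(\mu)$ and $G(\nu_n)\to G(\nu)$ in the Riemannian distance, hence in the Euclidean topology on $\bP_m$. Since $\le$ is a closed relation on $\bP_m$, passing to the limit gives $G(\mu)\le G(\nu)$.

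The main obstacle I foresee is not in any single step but in the glue: one needs Strassen's theorem in the exact form that $\mu\le\nu$ (defined via open upper sets) is equivalent to the existence of an order-coupling, which requires the underlying space to be Polish and the order to be closed; and one needs empirical-measure convergence in $d_1^W$ for measures with only a first moment. Both are standard but nontrivial, and they are the pieces that upgrade the multivariate L\"owner--Heinz inequality of \cite{LL1,BK} from $\cP_0(\bP_m)$ to all of $\cP^1(\bP_m)$. Everything else is a clean limiting argument using the contraction of $G$ with respect to the Wasserstein metric and the closedness of the Loewner cone.
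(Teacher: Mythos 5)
Your proof is correct, but it follows a genuinely different route from the paper's. The paper does not use Strassen's theorem or empirical measures at all: it truncates $\mu$ and $\nu$ to $\Sigma_n=\{X:(1/n)I\le X\le nI\}$, sending the excess $\mu$-mass to the point $(1/n)I$ and the excess $\nu$-mass to $nI$ (this asymmetric placement is exactly what preserves the order $\mu_n\le\nu_n$ without ever constructing a coupling), invokes the monotonicity result of Kim--Lee \cite{KL} for \emph{compactly supported} measures, proves $d_1^W(\mu_n,\mu)\to0$ by checking weak convergence together with convergence of the first moments $\int\|\log X\|_2\,d\mu_n\to\int\|\log X\|_2\,d\mu$ (via \cite[Theorem 7.12]{Vi1}), and concludes with the Fundamental Contraction Property exactly as you do. Your version instead reduces all the way down to the uniformly finitely supported case of \cite{LL1,BK}, at the price of two external inputs: Strassen's theorem for the closed Loewner order on the Polish space $\bP_m$ (note that the paper's order is defined via \emph{open} upper sets, so you should record the small layer-cake argument showing that this implies $\int f\,d\mu\le\int f\,d\nu$ for bounded increasing continuous $f$, which is the hypothesis in the standard statements of Strassen/Kamae--Krengel--O'Brien), and almost-sure $d_1^W$-convergence of empirical measures for a measure with finite first moment (Varadarajan plus the strong law for the first moment, again via \cite[Theorem 7.12]{Vi1}). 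Your approach is more conceptual and bypasses the intermediate compactly supported case of \cite{KL} entirely, directly upgrading the multivariate L\"owner--Heinz inequality from $\cP_0(\bP_m)$ to $\cP^1(\bP_m)$; the paper's truncation is more elementary and self-contained, needing no coupling machinery. Both limiting arguments (contraction of $G$ under $d_1^W$, closedness of $\le$, coincidence of the metric and Euclidean topologies on $\bP_m$) are identical.
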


\begin{proof}
Assume that $\mu,\nu\in\cP^1(\bP_m)$ and $\mu\le\nu$. For each
$n\in\bN$ let $\Sigma_n:=\{X\in\bP_m:(1/n)I\le X\le nI\}$ and
$$
\mu_n:=\mu|_{\Sigma_n}+\mu(\bP_m\setminus\Sigma_n)\delta_{(1/n)I},\qquad
\nu_n:=\nu|_{\Sigma_n}+\nu(\bP_m\setminus\Sigma_n)\delta_{nI}.
$$
Then, as in the proof of \cite[Section 6]{KL}, we have
$\mu_n\le\nu_n$. Since $\mu_n,\nu_n\in\cP_c(\bP_m)$, we have
$G(\mu_n)\le G(\nu_n)$ by \cite[Theorem 5.5\,(6)]{KL}. We now prove that
$d_1^W(\mu_n,\mu)\to0$ as $n\to\infty$. From a basic fact on the
convergence in Wasserstein spaces (see \cite[Theorem 7.12]{Vi1}) we may prove
that $\mu_n\to\mu$ weakly and
\begin{equation*}
\lim_{n\to\infty}\int_{\bP_m}\|\log
X\|_2\,d\mu_n(X)=\int_{\bP_m}\|\log X\|_2\,d\mu(X).
\end{equation*}
Since $\mu(\bP_m\setminus\Sigma_n)\to0$, it is obvious that
$\mu_n\to\mu$ weakly. Note that
\begin{align*}
\int_{\bP_m}\|\log X\|_2\,d\mu_n(X)
&=\int_{\Sigma_n}\|\log X\|_2\,d\mu(X)+\|\log((1/n)I)\|_2\,\mu(\bP_m\setminus\Sigma_n) \\
&=\int_{\Sigma_n}\|\log X\|_2\,d\mu(X)+\sqrt m\,(\log
n)\,\mu(\bP_m\setminus\Sigma_n).
\end{align*}
Note also that if $X\in\bP_m\setminus\Sigma_n$, then either the largest
eigenvalue of $X$ satisfies $\lambda_1(X)>n$ or the smallest one does $\lambda_m(X)<1/n$,
so we have $\|\log X\|_2\ge\log n$. Therefore, since
$\int_{\bP_m}\|\log X\|_2\,d\mu(X)<\infty$, we have
$$
(\log
n)\,\mu(\bP_m\setminus\Sigma_n)\le\int_{\bP_m\setminus\Sigma_n}\|\log
X\|_2\,d\mu(X) \ \longrightarrow\ 0\quad\mbox{as $n\to\infty$},
$$
so that
$$
\lim_{n\to\infty}\int_{\bP_m}\|\log X\|_2\,d\mu(X)
=\lim_{n\to\infty}\int_{\Sigma_n}\|\log
X\|_2\,d\mu(X)=\int_{\bP_m}\|\log X\|_2\,d\mu(X).
$$
We thus have $d_1^W(\mu_n,\mu)\to0$, which implies
$\delta(G(\mu_n),G(\mu))\to0$ by the fundamental contraction
property, so $\|G(\mu_n)-G(\mu)\|_2\to0$. Since
$\|G(\nu_n)-G(\nu)\|_2\to0$ similarly, $G(\mu)\le G(\nu)$ follows by
taking the limit of $G(\mu_n)\le G(\nu_n)$.
\end{proof}

\section{Log-majorization}

For $1\leq k\leq m$ and $A\in {\Bbb P}_{m}$, let $\Lambda^{k}A$ be
the $k$th {\it antisymmetric tensor power} of $A$. See \cite{AH,BK,HiP1}
for basic properties of $\Lambda^{k}$; for instance,
\begin{eqnarray}
\Lambda^{k}(AB)&=& (\Lambda^{k}A)(\Lambda^{k}B), \nonumber\\
\label{step 3}\Lambda^{k}(A^{t})&=&(\Lambda^{k}A)^{t}, \ \ t>0,\\
\label{step2}\lambda_{1}(\Lambda^{k}A)&=&\prod_{j=1}^{k}\lambda_{j}(A).
\end{eqnarray}

The $k$th antisymmetric tensor power map $\Lambda^k$ maps ${\Bbb P}_{m}$
continuously into ${\Bbb P}_{\ell}$ where $\ell:={m\choose k}$.
This induces the push-forward map
$$\Lambda^{k}_{*}: {\mathcal P}({\Bbb P}_{m})\to {\mathcal P}({\Bbb
P}_{\ell}), \qquad
\Lambda^{k}_{*}(\mu):=\mu\circ (\Lambda^{k})^{-1},$$
that is, $\Lambda^{k}_{*}(\mu)({\mathcal
O})=\mu((\Lambda^{k})^{-1}({\mathcal O}))$ for all Borel sets
${\mathcal O}\subset{\Bbb P}_{\ell}$.

\begin{proposition}\label{P:Lip}
The map $\Lambda^{k}:{\Bbb P}_{m}\to {\Bbb P}_{\ell}$ is
Lipschitzian, that is,
$$d(\Lambda^{k}A,\Lambda^{k}B)\leq \alpha_{m,k}\ d(A,B),\qquad
A,B\in {\Bbb P}_{m},
$$
where
$\alpha_{m,k}:=\sqrt{k{m-1\choose k-1}}$.
Furthermore, $\Lambda^{k}_{*}: {\mathcal P}^p({\Bbb P}_{m})\to
{\mathcal P}^p({\Bbb P}_{\ell})$ is Lipschitzian for every $p\geq 1$, that is,
$$d_{p}^W(\Lambda^k_{*}(\mu),\Lambda^k_{*}(\nu))\leq
\alpha_{m,k} \ d_{p}^W(\mu,\nu),  \ \ \  \ \mu,\nu\in {\mathcal
P}^{p}({\Bbb P}_{m}).$$
\end{proposition}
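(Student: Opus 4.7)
The second Lipschitz assertion on push-forwards follows immediately from Lemma \ref{L:lip} once the first is established, so the whole proof reduces to showing $d(\Lambda^kA,\Lambda^kB)\le\alpha_{m,k}d(A,B)$ for $A,B\in\bP_m$. My plan is to reduce this to a finite-dimensional quadratic inequality on the logarithms of the eigenvalues of $M:=A^{-1/2}BA^{-1/2}$.

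First, I would use the multiplicativity of $\Lambda^k$ and (3.1) together with \eqref{step 3} applied to $t=\pm1/2$ to rewrite
$$
(\Lambda^kA)^{-1/2}(\Lambda^kB)(\Lambda^kA)^{-1/2}=\Lambda^k(A^{-1/2})\,\Lambda^k(B)\,\Lambda^k(A^{-1/2})=\Lambda^k M.
$$
Next, if $M$ has eigenvalues $\mu_1,\dots,\mu_m>0$, then by the general definition of the antisymmetric tensor power, $\Lambda^k M$ has eigenvalues $\prod_{i\in S}\mu_i$ indexed by the $k$-element subsets $S\subset\{1,\dots,m\}$, and therefore $\log\Lambda^k M$ has eigenvalues $\sum_{i\in S}x_i$ where $x_i:=\log\mu_i$. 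Consequently
$$
d(\Lambda^kA,\Lambda^kB)^2=\|\log\Lambda^kM\|_2^2=\sum_{|S|=k}\Bigl(\sum_{i\in S}x_i\Bigr)^2,\qquad d(A,B)^2=\sum_{i=1}^m x_i^2,
$$
so the Lipschitz bound is equivalent to the purely combinatorial inequality
$$
\sum_{|S|=k}\Bigl(\sum_{i\in S}x_i\Bigr)^2\le k\binom{m-1}{k-1}\sum_{i=1}^m x_i^2\qquad(x\in\bR^m).
$$

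The third step is to verify this inequality by diagonalizing the associated quadratic form. Expanding the left-hand side and counting, for each unordered pair $\{i,j\}$, the number of $k$-subsets containing it, I would write the left-hand side as $x^T Mx$ with
$$
M=\binom{m-2}{k-1}I+\binom{m-2}{k-2}J,
$$
where $J$ is the all-ones matrix. Since $J$ has spectrum $\{m,0,\dots,0\}$, the matrix $M$ has largest eigenvalue
$$
\binom{m-2}{k-1}+m\binom{m-2}{k-2}=\binom{m-1}{k-1}+(m-1)\binom{m-2}{k-2},
$$
achieved on the all-ones vector; a short manipulation using $k(m-1)\ge m-k$ shows this equals $k\binom{m-1}{k-1}$ and dominates the other eigenvalue $\binom{m-2}{k-1}$. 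This gives the desired bound with sharp constant $\alpha_{m,k}^2=k\binom{m-1}{k-1}$ (equality holding when all $\mu_i$ are equal, which corresponds to $B=cA$).

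The only mildly delicate point is the eigenvalue computation for the structured matrix $M$, but its $I+cJ$ form makes it routine. Finally, the Lipschitz property of $\Lambda^k_*:\cP^p(\bP_m)\to\cP^p(\bP_\ell)$ with the same constant is immediate from Lemma \ref{L:lip}.
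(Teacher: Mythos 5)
Your proof is correct. The reduction is identical to the paper's: both pass to $M=A^{-1/2}BA^{-1/2}$, use multiplicativity of $\Lambda^k$ to identify $(\Lambda^kA)^{-1/2}(\Lambda^kB)(\Lambda^kA)^{-1/2}=\Lambda^kM$, and note that $\log\Lambda^kM$ has eigenvalues $\sum_{i\in S}x_i$ over $k$-subsets $S$, so everything comes down to $\sum_{|S|=k}\bigl(\sum_{i\in S}x_i\bigr)^2\le k\binom{m-1}{k-1}\sum_i x_i^2$. Where you diverge is in proving this quadratic inequality: the paper applies Cauchy--Schwarz termwise, $\bigl(\sum_{i\in S}x_i\bigr)^2\le k\sum_{i\in S}x_i^2$, and then counts that each index lies in $\binom{m-1}{k-1}$ subsets; you instead diagonalize the Gram matrix $\binom{m-2}{k-1}I+\binom{m-2}{k-2}J$ and identify its top eigenvalue. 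Your combinatorial bookkeeping checks out (diagonal entry $\binom{m-2}{k-1}+\binom{m-2}{k-2}=\binom{m-1}{k-1}$, off-diagonal $\binom{m-2}{k-2}$), and the top eigenvalue $\binom{m-2}{k-1}+m\binom{m-2}{k-2}$ does equal $k\binom{m-1}{k-1}$ --- though note this is an exact binomial identity, $(m-1)\binom{m-2}{k-2}=(k-1)\binom{m-1}{k-1}$, so your appeal to the inequality $k(m-1)\ge m-k$ is unnecessary (it is only needed, trivially, to see the top eigenvalue dominates $\binom{m-2}{k-1}$). Your route is slightly longer but buys something the paper's does not state: the constant $\alpha_{m,k}$ is the exact Lipschitz constant, with equality on the all-ones eigenvector, i.e.\ when $B$ is a scalar multiple of $A$. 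One cosmetic caveat: you use $M$ both for $A^{-1/2}BA^{-1/2}$ and for the Gram matrix of the quadratic form; rename one of them. The final step, deducing the Lipschitz bound for $\Lambda^k_*$ from Lemma \ref{L:lip}, matches the paper exactly.
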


\begin{proof}
The eigenvalue list of
$\Lambda^{k}(A^{-\frac{1}{2}}BA^{-\frac{1}{2}})=
(\Lambda^{k}A)^{-\frac{1}{2}}(\Lambda^{k}B)(\Lambda^{k}A)^{-\frac{1}{2}}$
is
$$\prod_{j=1}^{k}\lambda_{i_{j}}(A^{-\frac{1}{2}}BA^{-\frac{1}{2}}),\qquad
1\leq i_{1}<\cdots<i_{k}\leq m.$$ Hence
\begin{eqnarray*}
d^{2}(\Lambda^kA,\Lambda^kB)&=&\big\|\log
\Lambda^k(A^{-\frac{1}{2}}BA^{-\frac{1}{2}})\big\|_{2}^2\\&=&
\sum_{1\leq i_{1}<\cdots<i_{k}\leq m}\log^2
\left(\prod_{j=1}^{k}\lambda_{i_{j}}(A^{-\frac{1}{2}}BA^{-\frac{1}{2}})\right)
\\&=&
\sum_{1\leq i_{1}<\cdots<i_{k}\leq m}\left[\sum_{j=1}^k\log
\lambda_{i_{j}}(A^{-\frac{1}{2}}BA^{-\frac{1}{2}})\right]^2
\\&\le&
\sum_{1\leq i_{1}<\cdots<i_{k}\leq
m}k\sum_{j=1}^k\log^2
\lambda_{i_{j}}(A^{-\frac{1}{2}}BA^{-\frac{1}{2}})\\
&=&k{m-1\choose k-1}\sum_{i=1}^m\log^2
\lambda_{i}(A^{-\frac{1}{2}}BA^{-\frac{1}{2}}) \\
&=&k{m-1\choose k-1}d^{2}(A,B).
\end{eqnarray*}
The Lipschitz continuity of $\Lambda^{k}_{*}$ follows by Lemma
\ref{L:lip}.
\end{proof}

The following is an extension of the result by  Bhatia and Karandikar \cite[Theorem 4.4]{BK}
for finitely supported measures to general probability measures in $\cP^1(\bP_m)$.

\begin{theorem} For $p\geq 1$,  the following diagram commute:
\[ \begin{CD}
{\Bbb P}_{m}  @>\Lambda^{k}>> {\Bbb P}_{\ell} \\
@AGAA     @AAGA\\
{\mathcal P}^p({\Bbb P}_{m}) @>\Lambda^{k}_{*}>> {\mathcal
P}^p({\Bbb P}_{\ell})
\end{CD},
\]
that is,
\begin{eqnarray}\label{E:comu}G\circ \Lambda^{k}_{*}=\Lambda^{k}\circ G.
\end{eqnarray}
\end{theorem}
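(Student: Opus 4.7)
The plan is by density and continuity: the commutation identity $G(\Lambda^k_*\mu_0)=\Lambda^k(G(\mu_0))$ is already available on the subclass $\mathcal{P}_0$ by Bhatia--Karandikar \cite{BK}, so the task is to extend this equality from $\mathcal{P}_0$ to all of $\mathcal{P}^p(\mathbb{P}_m)$ via the Lipschitz and contraction apparatus assembled in Sections 2 and 4.

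First I would fix $\mu\in\mathcal{P}^p(\mathbb{P}_m)$ and, using the density of $\mathcal{P}_0$ in $\mathcal{P}^p(\mathbb{P}_m)$ recorded in Section 2, choose a sequence $\mu_n\in\mathcal{P}_0$ with $d_p^W(\mu_n,\mu)\to 0$. For each $n$ the finite-support result yields the commutation $G(\Lambda^k_*\mu_n)=\Lambda^k(G(\mu_n))$, so the goal reduces to checking that both sides pass to the correct limits.

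Next I would take $n\to\infty$. On the right-hand side, Theorem \ref{T:ft} gives $d(G(\mu_n),G(\mu))\le d_p^W(\mu_n,\mu)\to 0$, and then the Lipschitz bound $d(\Lambda^kA,\Lambda^kB)\le\alpha_{m,k}d(A,B)$ from Proposition \ref{P:Lip} yields $\Lambda^k(G(\mu_n))\to\Lambda^k(G(\mu))$ in $\mathbb{P}_\ell$. On the left-hand side, the Lipschitz continuity of $\Lambda^k_*$ from Proposition \ref{P:Lip} gives $d_p^W(\Lambda^k_*\mu_n,\Lambda^k_*\mu)\le\alpha_{m,k}\,d_p^W(\mu_n,\mu)\to 0$, and Theorem \ref{T:ft} applied now in $\mathbb{P}_\ell$ gives $d(G(\Lambda^k_*\mu_n),G(\Lambda^k_*\mu))\to 0$. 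Passing to the limit in the identity for each $n$ yields $G(\Lambda^k_*\mu)=\Lambda^k(G(\mu))$.

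There is no substantive obstacle here: the argument is simply the limit of a known equality, with all four edges of the square continuous in the relevant metrics. A self-contained alternative that sidesteps \cite{BK} would invoke the Karcher equation of Theorem \ref{T:kare}. One would set $Z:=\Lambda^k(G(\mu))$, use $(\Lambda^kG(\mu))^{-1/2}=\Lambda^k(G(\mu)^{-1/2})$ (from $\Lambda^k(A^t)=(\Lambda^kA)^t$ at $t=-1/2$) together with the multiplicativity of $\Lambda^k$ to rewrite $Z^{-1/2}\Lambda^k(X)Z^{-1/2}=\Lambda^k(G(\mu)^{-1/2}XG(\mu)^{-1/2})$, and then exploit that the differential $d\Lambda^k|_I:\mathbb{H}_m\to\mathbb{H}_\ell$ is linear and satisfies $\log\Lambda^k(B)=d\Lambda^k|_I(\log B)$ (as seen by differentiating $(\Lambda^kB)^t=\Lambda^k(B^t)$ at $t=0$), so that the logarithm can be pulled through the $d\mu$-integral. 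This reduces the Karcher equation for $\Lambda^k_*\mu$ at $Z$ to the linear image of the Karcher equation for $\mu$ at $G(\mu)$, which vanishes.
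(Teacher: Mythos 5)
Your proposal is correct, but your primary route is genuinely different from the paper's. The paper proves the identity directly via the Karcher equation: it sets $Z:=G(\mu)$ and verifies that $\Lambda^k(Z)$ solves the Karcher equation for $\Lambda^k_*\mu$, using the derivation identity
$$\log\bigl[\Lambda^k(B)\bigr]=\Bigl(\sum_{j=1}^k I^{\otimes(j-1)}\otimes(\log B)\otimes I^{\otimes(k-j)}\Bigr)\Big|_{(\mathbb{C}^m)^{\Lambda k}}$$
to pull the logarithm through the integral, so that the vanishing of $\int\log Z^{-1/2}XZ^{-1/2}\,d\mu(X)$ forces the vanishing of the transformed integral; this is exactly the ``self-contained alternative'' you sketch at the end, and that sketch is essentially the paper's proof. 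Your main argument instead extends the Bhatia--Karandikar identity on $\mathcal{P}_0$ by density: all the ingredients you invoke (density of $\mathcal{P}_0$ in $\mathcal{P}^p$, the fundamental contraction property $d(G(\mu),G(\nu))\le d_1^W(\mu,\nu)\le d_p^W(\mu,\nu)$ on both $\mathbb{P}_m$ and $\mathbb{P}_\ell$, and the two Lipschitz bounds of Proposition \ref{P:Lip}) are available in the paper, and the limit passage is sound, so this is a valid alternative proof. What each buys: your density argument avoids any tensor-algebra computation and illustrates the general principle that identities between $d_p^W$-continuous maps need only be checked on a dense subclass, but it rests on the external finite-support result of \cite{BK}; the paper's Karcher-equation argument is self-contained given Theorem \ref{T:kare}, works uniformly on all of $\mathcal{P}^1(\mathbb{P}_m)$ in one step, and is the same mechanism the paper reuses elsewhere (e.g.\ for $G(\mu^{-1})=G(\mu)^{-1}$).
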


\begin{proof}
Let $\mu\in {\mathcal P}^{1}({\Bbb P}_{m})$.
By Theorem \ref{T:kare}, letting $Z:=G(\mu)$, we may prove that
$$
\int_{\bP_{\ell}}\log
\bigl(\Lambda^k(Z)\bigr)^{-1/2}X\bigl(\Lambda^k(Z)\bigr)^{-1/2}\,d(\Lambda^k_*\mu)(X)=0,
$$
i.e., $
\int_{\bP_m}\log\bigl[\Lambda^k\bigl(Z^{-1/2}XZ^{-1/2}\bigr)\bigr]\,d\mu(X)=0$. Note that
\begin{eqnarray*}
\log\bigl[\Lambda^k\bigl(Z^{-1/2}XZ^{-1/2}\bigr)\bigr]
&=&\log\bigl(Z^{-1/2}XZ^{-1/2}\bigr)^{\otimes k}\Big|_{({\Bbb C}^m)^{\Lambda k}} \\
&=&\Biggl(\sum_{j=1}^kI^{\otimes(j-1)}\otimes\bigl(\log
Z^{-1/2}XZ^{-1/2}\bigr) \otimes
I^{\otimes(k-j)}\Biggr)\Bigg|_{({\Bbb C}^m)^{\Lambda k}},
\end{eqnarray*}
where $({\Bbb C}^m)^{\Lambda k}$ is the $k$-fold antisymmetric tensor
space of ${\Bbb C}^m$.

Since $\int_{\bP_m}\log Z^{-1/2}XZ^{-1/2}\,d\mu(X)=0$, we have
\begin{align*}
&\int_{\bP_m}\log\bigl[\Lambda^k\bigl(Z^{-1/2}XZ^{-1/2}\bigr)\bigr]\,d\mu(X) \\
&\qquad=\Biggl(\sum_{j=1}^kI^{\otimes(j-1)}\otimes
\biggl(\int_{\bP_m}\log Z^{-1/2}XZ^{-1/2}\,\mu(X)\biggr) \otimes
I^{\otimes(k-j)}\Biggr)\Bigg|_{({\Bbb C}^m)^{\Lambda k}}=0.
\end{align*}
\end{proof}

Next, we introduce powers of probability measures on ${\Bbb P}_m$.
\begin{definition}
For $t\in {\Bbb R}\setminus \{0\}$ and
$\mathcal{O}\in {\mathcal B}({\Bbb P}_m)$, we let $ \mathcal{O}^{t}
:= \{ A^{t}: A \in \mathcal{O} \} $ and
\begin{eqnarray*}
\mu^{t} (\mathcal{O}) := \mu(\mathcal{O}^{\frac{1}{t}}).
\end{eqnarray*}
In terms of push-forward measures, $\mu^{t}=g_{*}\mu$,  where $g(X):=X^{t}$. Note that
$\mu^{t}\in {\mathcal P}^p(\bP_m)$ if $\mu\in {\mathcal P}^p(\bP_m)$.
\end{definition}

By (\ref{step 3}) and the definition of push-forward map, we have
\begin{eqnarray}\label{E:Hiai}
\Lambda^{k}_{*}(\mu^{t})=\Lambda^{k}_{*}(\mu)^t, \qquad
\mu\in {\mathcal P}^p(\bP_m), \ t\neq 0.
\end{eqnarray}

 In \cite{KLL},
Kim-Lee-Lim established that $\|G(\mu^{t})\|\leq \|G(\mu)^{t}\| $
for $\mu\in {\mathcal P}^2(\bP_m)$ and $t\geq 1$, where $\|\cdot\|$
denotes the operator norm. It follows from the monotonicity of
Cartan barycenter and its the characterization via the Karcher equation.
In the present situation, the same method based on Theorems \ref{T:kare}
and \ref{T:mono} proves that
\begin{eqnarray}\label{T:AHI}\|G(\mu^{t})\|\leq \|G(\mu)^{t}\|, \qquad
\mu\in {\mathcal P}^1(\bP_m), \ \ t\geq 1.
\end{eqnarray}

The main result of this section is the following:

\begin{theorem}\label{T:MAIN}
For every $\mu\in {\mathcal P}^1(\bP_m)$ and $t\geq 1$,
\begin{eqnarray*}
G(\mu^{t})\underset{\log}{\prec}G(\mu)^{t}.
\end{eqnarray*}
In particular, for any unitary invariant norm $|||\cdot|||$,
$$|||G(\mu^{t})|||\leq |||G(\mu)^{t}|||,\qquad t\ge1.$$
\end{theorem}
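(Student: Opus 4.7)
The plan is to combine three ingredients already established in this section: the commutative diagram $G\circ\Lambda^k_*=\Lambda^k\circ G$, the identity $\Lambda^k_*(\mu^t)=\Lambda^k_*(\mu)^t$ from \eqref{E:Hiai}, and the operator norm inequality \eqref{T:AHI}. The key observation is that $\prod_{j=1}^k\lambda_j(A)=\lambda_1(\Lambda^k A)=\|\Lambda^k A\|$ by \eqref{step2}, so each log-majorization inequality for $1\le k\le m-1$ reduces to an operator norm inequality about $\Lambda^k_*\mu$.

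First I would fix $1\le k\le m-1$ and apply \eqref{T:AHI} to the push-forward measure $\Lambda^k_*\mu\in\cP^1(\bP_\ell)$, obtaining
\begin{equation*}
\bigl\|G\bigl((\Lambda^k_*\mu)^t\bigr)\bigr\|\le\bigl\|G(\Lambda^k_*\mu)^t\bigr\|.
\end{equation*}
Using \eqref{E:Hiai} on the left and the commutative diagram \eqref{E:comu} plus \eqref{step 3} on the right, this rewrites as
\begin{equation*}
\bigl\|\Lambda^k G(\mu^t)\bigr\|=\bigl\|G(\Lambda^k_*(\mu^t))\bigr\|\le\bigl\|(\Lambda^k G(\mu))^t\bigr\|=\bigl\|\Lambda^k(G(\mu)^t)\bigr\|.
\end{equation*}
Invoking \eqref{step2} then yields $\prod_{j=1}^k\lambda_j(G(\mu^t))\le\prod_{j=1}^k\lambda_j(G(\mu)^t)$, which is the desired log-majorization inequality for indices $k<m$.

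For the determinant equality (the case $k=m$), I would take the trace of the Karcher equation in Theorem \ref{T:kare}. Since $\tr\log(Z^{-1/2}XZ^{-1/2})=\log\det X-\log\det Z$, writing $Z=G(\mu)$ gives
\begin{equation*}
\log\det G(\mu)=\int_{\bP_m}\log\det X\,d\mu(X),
\end{equation*}
provided the integral exists; this is controlled by the first-moment assumption together with $|\log\det X|\le\sqrt m\,\|\log X\|_2$. Applying the same identity to $\mu^t$ and using $\log\det X^t=t\log\det X$ gives $\log\det G(\mu^t)=t\log\det G(\mu)=\log\det G(\mu)^t$, completing the log-majorization. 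The unitarily invariant norm inequality is then a standard consequence of log-majorization recalled in the introduction.

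The main obstacle I anticipate is not in the above chain, which is essentially formal once the commutative diagram and \eqref{T:AHI} are in place, but rather in being sure that the hypotheses propagate correctly: in particular one must check that $\Lambda^k_*\mu$ indeed lies in $\cP^1(\bP_\ell)$ so that Theorem \ref{T:kare} and \eqref{T:AHI} apply to it. This is exactly where Proposition \ref{P:Lip} enters, since the Lipschitz property of $\Lambda^k_*$ with respect to $d_1^W$ guarantees that the push-forward preserves finite first moment. Once this is verified, the proof reduces to the three-line chain of identities above.
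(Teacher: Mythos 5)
Your proposal is correct, and for the core of the theorem (the inequalities $\prod_{j=1}^k\lambda_j(G(\mu^t))\le\prod_{j=1}^k\lambda_j(G(\mu)^t)$ for $k<m$) it is exactly the paper's argument: the chain $\|\Lambda^kG(\mu^t)\|=\|G(\Lambda^k_*(\mu^t))\|=\|G((\Lambda^k_*\mu)^t)\|\le\|G(\Lambda^k_*\mu)\|^t=\|\Lambda^k(G(\mu)^t)\|$ via \eqref{step2}, \eqref{E:comu}, \eqref{E:Hiai} and \eqref{T:AHI}. Your remark that Proposition \ref{P:Lip} is what guarantees $\Lambda^k_*\mu\in\cP^1(\bP_\ell)$ makes explicit a point the paper leaves implicit, which is a worthwhile check.

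The only place you diverge is the determinant equality. The paper stays inside the $\Lambda^k$ framework: it runs the same chain at $k=m$, observes that $\Lambda^m=\det$ lands in $\bP_1=(0,\infty)$ where the barycenter is explicitly $G(\nu)=\exp\int\log x\,d\nu(x)$, and computes $G\bigl((\Lambda^m_*\mu)^t\bigr)=\det^tG(\mu)$ from that formula. You instead take the trace of the Karcher equation of Theorem \ref{T:kare}, using $\tr\log(Z^{-1/2}XZ^{-1/2})=\log\det X-\log\det Z$ to get $\log\det G(\mu)=\int_{\bP_m}\log\det X\,d\mu(X)$, and then apply this to $\mu^t$. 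The two derivations are morally the same fact (the paper's one-dimensional formula is itself the scalar Karcher equation), but yours is marginally more direct and avoids invoking the $k=m$ chain; your integrability check $|\log\det X|\le\sqrt m\,\|\log X\|_2$ is exactly what is needed under the first-moment hypothesis. Either route is fine.
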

\begin{proof}
For $1\le k\le m$ we have
\begin{eqnarray*}
\prod_{j=1}^{k}\lambda_{j}(G(\mu^{t}))&=&\lambda_{1}(\Lambda^{k}G(\mu^{t}))
=\|\Lambda^{k}G(\mu^{t})\|\\
&=&\big\|G\bigl(\Lambda^{k}_{*}(\mu^{t})\bigr)\big\|
=\big\|G\bigl((\Lambda^{k}_{*}(\mu))^{t}\bigr)\big\| \\
&\leq&\big\|G\bigl((\Lambda^{k}_{*}(\mu))\bigr)\big\|^t
=\prod_{j=1}^{k}\lambda_{j}(G(\mu)^{t}),
\end{eqnarray*}
where \eqref{step2}, \eqref{E:comu}, \eqref{E:Hiai}, and
\eqref{T:AHI} have been used. It remains to show that $\det G(\mu^t)=\det
G(\mu)^t$. When $k=m$, since $\Lambda^m=\det$ and
$G\bigl((\Lambda^m_*(\mu))^t\bigr)$ is a positive scalar, the equalities shown
above say that $\det G(\mu^t)=G\bigl((\Lambda^m_*(\mu))^t\bigr)$. In the
one-dimensional case on $\bP_1=(0,\infty)$, we find by a direct
computation that
$$
G(\nu)=\exp\int_{(0,\infty)}\log x\,d\nu(x)
$$
for every $\nu\in\cP^1((0,\infty))$. Therefore,
\begin{align*}
G\bigl((\Lambda^m_*(\mu))^t\bigr)&=\exp\int_{(0,\infty)}\log
x\,d(\Lambda^m_*(\mu)^t)(x)
=\exp\int_{\bP_m}\log({\det}^tA)\,d\mu(A) \\
&=\exp\int_{\bP_m}t\tr(\log A)\,d\mu(A)
={\det}^t\biggl(\exp\int_{\bP_m}\log A\,d\mu(A)\biggr) \\
&=\det G(\mu)^t,
\end{align*}
implying that $\det G(\mu^t)=\det G(\mu)^t$.
\end{proof}

By a consequence of the preceding theorem, we have the following:
\begin{corollary} For every $\mu\in {\mathcal P}^1({\Bbb P}_{m})$,
\begin{equation*}
G(\mu^{q})^{\frac{1}{q}}\underset{(\log)}{\prec}G(\mu^{p})^{\frac{1}{p}},
\qquad 0<p\leq q,
\end{equation*}
\begin{equation*}
G(\mu^{p})^{\frac{1}{p}}\underset{(\log)}{\prec}G(\mu)\underset{(\log)}{\prec}
G(\mu^{\frac{1}{p}})^{p},\qquad p\geq 1,
\end{equation*}
and therefore
\begin{equation}\label{E:LL}
|||G(\mu^{q})^{\frac{1}{q}}|||\leq
|||G(\mu^{p})^{\frac{1}{p}}|||,
\qquad 0<p\leq q,
\end{equation}
\begin{equation*}
|||G(\mu^{p})^{\frac{1}{p}}|||\leq |||G(\mu)|||\leq
|||G(\mu^{\frac{1}{p}})^{p}|||, \ \ \ \ p\geq 1
\end{equation*}
for all unitarily invariant norms $|||\cdot |||$.
\end{corollary}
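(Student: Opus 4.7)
The plan is to derive all three log-majorizations as direct consequences of Theorem \ref{T:MAIN}, combined with the elementary observation that the log-majorization relation is preserved by the power maps $X\mapsto X^s$ for $s>0$. Specifically, since $\lambda_i(X^s)=\lambda_i(X)^s$ for $X\in\bP_m$ and $x\mapsto x^s$ is strictly increasing on $(0,\infty)$, it is immediate that $A\underset{\log}{\prec}B$ implies $A^s\underset{\log}{\prec}B^s$. I will also use the definition of $\mu^t$ as a push-forward: since $(X^p)^t=X^{pt}$, we have $(\mu^p)^t=\mu^{pt}$ for all real $p,t\neq0$.

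For the first log-majorization, fix $0<p\le q$ and set $t:=q/p\geq1$. Applying Theorem \ref{T:MAIN} to the measure $\mu^p\in\cP^1(\bP_m)$ with exponent $t$ yields
$$
G((\mu^p)^t)\underset{\log}{\prec}G(\mu^p)^t,
$$
and the identity $(\mu^p)^t=\mu^q$ rewrites this as $G(\mu^q)\underset{\log}{\prec}G(\mu^p)^{q/p}$. Raising both sides to the power $1/q$ preserves log-majorization, which gives exactly $G(\mu^q)^{1/q}\underset{\log}{\prec}G(\mu^p)^{1/p}$.

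For the second chain, the left-hand log-majorization $G(\mu^p)^{1/p}\underset{\log}{\prec}G(\mu)$ follows by applying Theorem \ref{T:MAIN} directly (with $t=p\geq1$) and raising to the $1/p$ power. For the right-hand log-majorization, I apply Theorem \ref{T:MAIN} to the measure $\mu^{1/p}\in\cP^1(\bP_m)$ with exponent $t=p\geq1$:
$$
G\bigl((\mu^{1/p})^p\bigr)\underset{\log}{\prec}G(\mu^{1/p})^p,
$$
and the identity $(\mu^{1/p})^p=\mu$ converts the left side into $G(\mu)$. The unitarily invariant norm inequalities are then an immediate consequence of the well-known fact, already cited in the introduction of the paper, that log-majorization implies the inequality $|||A|||\leq|||B|||$ for every unitarily invariant norm.

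There is no real obstacle here: the corollary is essentially a formal manipulation of Theorem \ref{T:MAIN}. The only point that requires a brief justification is the stability of log-majorization under $X\mapsto X^s$, which is a one-line verification from the definition.
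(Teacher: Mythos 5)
Your proposal is correct and follows exactly the route the paper intends: the paper states this corollary without proof as an immediate consequence of Theorem \ref{T:MAIN}, and your argument --- applying that theorem to $\mu^p$ (resp.\ $\mu^{1/p}$) with exponent $t=q/p$ (resp.\ $t=p$), using $(\mu^p)^t=\mu^{pt}$, and the stability of log-majorization under $X\mapsto X^s$ --- is precisely the standard filling-in of those details. No gaps.
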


\section{Lie-Trotter formula}

The Lie-Trotter formula for the Cartan (or Karcher) mean of multivariable
positive definite matrices is
\begin{equation*}
{\underset{t\to 0}{\lim}}\, G(
A_1^t,\ldots,A_n^t)^{\frac{1}{t}}=\exp\left(\frac{1}{n}
\sum_{j=1}^{n}\log A_{j}\right),
\end{equation*}
see \cite{HiP,BJL,BG}.

In this section we establish the Lie-Trotter formula and associated norm
inequalities for probability measures in a certain sub-class of $\cP^1(\bP_m)$.

\begin{lemma}\label{L1}
For every $X\in\bP_m$,
$$
\|\log X\|\le\log(\|X\|+\|X^{-1}\|).
$$
Moreover, for every $r>0$ there exists a constant $c_r>0$ such that
$$
\|\log X\|_2\le c_r(\|X\|+\|X^{-1}\|)^r,\qquad X\in\bP_m.
$$
\end{lemma}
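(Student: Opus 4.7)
The plan is to diagonalize and reduce both inequalities to scalar estimates on the eigenvalues of $X$. Write $\lambda_1\ge\cdots\ge\lambda_m>0$ for the eigenvalues of $X\in\bP_m$, so that $\|X\|=\lambda_1$, $\|X^{-1}\|=\lambda_m^{-1}$, and the Hermitian matrix $\log X$ has eigenvalues $\log\lambda_1,\dots,\log\lambda_m$. For the first bound I would use that the operator norm of a Hermitian matrix equals the maximum absolute value of its eigenvalues, giving
$$
\|\log X\|=\max_i|\log\lambda_i|=\max\bigl(\log\lambda_1,\,-\log\lambda_m\bigr)=\max\bigl(\log\|X\|,\,\log\|X^{-1}\|\bigr).
$$
Since $\|X\|$ and $\|X^{-1}\|$ are each majorized by $\|X\|+\|X^{-1}\|$, monotonicity of $\log$ delivers the first inequality immediately.

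For the second bound I would combine the dimensional comparison $\|\log X\|_2\le\sqrt{m}\,\|\log X\|$ (valid for any $m\times m$ Hermitian matrix) with the first part to obtain
$$
\|\log X\|_2\le\sqrt{m}\,\log\bigl(\|X\|+\|X^{-1}\|\bigr).
$$
The point is then to absorb the logarithm into the $r$th power. By AM-GM and the fact that $\|X\|\cdot\|X^{-1}\|\ge\|XX^{-1}\|=1$, we have $\|X\|+\|X^{-1}\|\ge 2\sqrt{\|X\|\cdot\|X^{-1}\|}\ge 2$, so the argument of the logarithm always lies in $[2,\infty)$. On that interval, for any fixed $r>0$, the ratio $t\mapsto\log(t)/t^r$ is continuous and vanishes at infinity, hence bounded above by a finite constant $M_r$. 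Setting $c_r:=\sqrt{m}\,M_r$ yields $\|\log X\|_2\le c_r(\|X\|+\|X^{-1\|})^r$.

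There is no serious obstacle; the whole statement is a packaging of the spectral theorem together with the elementary asymptotic $\log t=o(t^r)$. The only point requiring mild care is the lower bound $\|X\|+\|X^{-1}\|\ge 2$, which guarantees that the scalar estimate $\log t\le M_r t^r$ only needs to be verified on $[2,\infty)$ rather than on all of $(0,\infty)$, where it of course fails.
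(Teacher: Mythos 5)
Your proof is correct and follows essentially the same route as the paper: both reduce the first inequality to $\|\log X\|=\max\bigl(\log\|X\|,\log\|X^{-1}\|\bigr)$ (the paper via the operator bounds $\|X^{-1}\|^{-1}I\le X\le\|X\|I$, you via diagonalization, which is the same computation), and both handle the second by combining $\|\log X\|_2\le\sqrt m\,\|\log X\|$ with the boundedness of $(\log t)/t^r$ on $[2,\infty)$, using $\|X\|+\|X^{-1}\|\ge2$. No gaps.
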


\begin{proof}
Since $\|X^{-1}\|^{-1}I\le X\le\|X\|I$, we have
$(-\log\|X^{-1}\|)I\le \log X\le(\log\|X\|)I$ so that
$$
\|\log X\|=\max\bigl\{\log\|X\|,\log\|X^{-1}\|\bigr\}\le\log(\|X\|+\|X^{-1}\|).
$$
Next, for any $r>0$, since $\lim_{x\to\infty}(\log x)/x^r=0$,
$b_r:=\sup_{x\ge1}(\log x)/x^r<\infty$. Noting that
$\|X\|+\|X^{-1}\|\ge2\sqrt{\|X\|\,\|X^{-1}\|}\ge2$, we have
\begin{align*}
\|\log X\|_2&\le\sqrt m\,\|\log X\|\le\sqrt m\,\log\bigl(\|X\|+\|X^{-1}\|\bigr) \\
&\le\sqrt m\,b_r\bigl(\|X\|+\|X^{-1}\|\bigr)^r,\qquad X\in\bP_m.
\end{align*}
\end{proof}

Now, for $\mu\in\cP(\bP_m)$ we consider the condition
\begin{equation}\label{F-4.1}
\int_{\bP_m}\bigl(\|X\|+\|X^{-1}\|\bigr)\,d\mu(X)<\infty.
\end{equation}

\begin{lemma}\label{L3}
If $\mu\in\cP(\bP_m)$ satisfies \eqref{F-4.1}, then
$\mu\in\cP^p(\bP_m)$ for every $p\in[1,\infty)$.
\end{lemma}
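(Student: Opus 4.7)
The plan is very short: reduce the $p$-moment condition to the hypothesis \eqref{F-4.1} by applying the second estimate in Lemma~\ref{L1} with the scaling parameter chosen to match $p$.

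First, I would fix the reference point $Y = I \in \bP_m$, so that $d(X, I) = \|\log X\|_2$, and recall that by the remark after the definition of $\cP^p(\bP_m)$ it suffices to show $\int_{\bP_m} d^p(X, I)\,d\mu(X) = \int_{\bP_m} \|\log X\|_2^p\,d\mu(X) < \infty$.

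Next, I would invoke Lemma~\ref{L1} with the specific choice $r = 1/p$: there is a constant $c_{1/p} > 0$ such that
\begin{equation*}
\|\log X\|_2 \le c_{1/p}\bigl(\|X\| + \|X^{-1}\|\bigr)^{1/p},\qquad X\in\bP_m.
\end{equation*}
Raising both sides to the $p$th power gives
\begin{equation*}
\|\log X\|_2^p \le c_{1/p}^p\bigl(\|X\| + \|X^{-1}\|\bigr),\qquad X\in\bP_m.
\end{equation*}

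Finally, integrating against $\mu$ and using the hypothesis \eqref{F-4.1} yields
\begin{equation*}
\int_{\bP_m}\|\log X\|_2^p\,d\mu(X) \le c_{1/p}^p\int_{\bP_m}\bigl(\|X\| + \|X^{-1}\|\bigr)\,d\mu(X) < \infty,
\end{equation*}
so that $\mu \in \cP^p(\bP_m)$. There is really no obstacle here; the only point requiring a bit of care is noticing that the free parameter $r$ in Lemma~\ref{L1} can be tuned to $1/p$ to convert a $p$th-power integral of $\|\log X\|_2$ into the first-moment integral of $\|X\| + \|X^{-1}\|$ supplied by the hypothesis.
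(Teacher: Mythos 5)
Your proof is correct and follows exactly the same route as the paper: take $Y=I$, apply the second estimate in Lemma~\ref{L1} with $r=1/p$, raise to the $p$th power, and integrate against $\mu$ using \eqref{F-4.1}. Nothing to add.
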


\begin{proof}
By Lemma \ref{L1} with $r=1/p$ we have
$$
\|\log X\|_2\le k_{1/p}\bigl(\|X\|+\|X^{-1}\|\bigr)^{1/p},\qquad
X\in\bP_m.
$$
Therefore,
$$
\int_{\bP_m}d^p(X,I)\,d\mu(X)=\int_{\bP_m}\|\log X\|_2^p\,d\mu(X)
\le
k_{1/p}^p\int_{\bP_m}\bigl(\|X\|+\|X^{-1}\|\bigr)\,d\mu(X)<\infty,
$$
implying $\mu\in\cP^p(\bP_m)$.
\end{proof}

When $\mu$ satisfies \eqref{F-4.1}, one can define the arithmetic and the
harmonic means of $\mu$ as
$$
\int_{\bP_m}X\,d\mu(X),\qquad\biggl(\int_{\bP_m}X^{-1}\,d\mu(X)\biggr)^{-1},
$$
respectively. By Lemma \ref{L3} one can also define the Cartan
barycenter $G(\mu)$.

The next lemma will be useful in the proof of our main result of this section.

\begin{lemma}\label{L4}
Assume that $\mu\in\cP(\bP_m)$ satisfies \eqref{F-4.1}. Then there
exist a sequence $\{\mu_n\}_{n=1}^{\infty}$ in $\cP_c(\bP_m)$ such that,
as $n\to\infty$,
$$
d_1^W(\mu_n,\mu)\longrightarrow0,
$$
and
$$
\int_{\bP_m}X\,d\mu_n(X)\longrightarrow\int_{\bP_m}X\,d\mu(X),\quad
\int_{\bP_m}X^{-1}\,d\mu_n(X)\longrightarrow\int_{\bP_m}X^{-1}\,d\mu(X).
$$
\end{lemma}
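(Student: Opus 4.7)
The strategy is to truncate $\mu$ to the compact spectral shells. Set $K_n:=\{X\in\bP_m:(1/n)I\le X\le nI\}$ and define
$$
\mu_n:=\mu|_{K_n}+\mu(\bP_m\setminus K_n)\,\delta_I.
$$
Each $K_n$ is closed and bounded in $\bH_m$, hence compact in $\bP_m$, so $\mu_n\in\cP_c(\bP_m)$. Since every $X\in\bP_m$ lies in $K_n$ for $n$ large enough, $K_n\nearrow\bP_m$ and consequently $\mu(\bP_m\setminus K_n)\to0$.

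For the Wasserstein convergence I would exhibit an explicit coupling. Let $\pi_n\in\cP(\bP_m\times\bP_m)$ be the pushforward of $\mu$ under the map sending $X\mapsto(X,X)$ when $X\in K_n$ and $X\mapsto(X,I)$ when $X\in\bP_m\setminus K_n$. A direct check of marginals shows $\pi_n\in\Pi(\mu,\mu_n)$, and the diagonal part contributes nothing to the transport cost, so
$$
d_1^W(\mu_n,\mu)\le\int_{\bP_m\times\bP_m}d(X,Y)\,d\pi_n(X,Y)=\int_{\bP_m\setminus K_n}\|\log X\|_2\,d\mu(X).
$$
Applying Lemma \ref{L1} with $r=1$, the integrand is bounded by $c_1(\|X\|+\|X^{-1}\|)$, which is $\mu$-integrable by \eqref{F-4.1}. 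Dominated convergence on the shrinking sets $\bP_m\setminus K_n$ then gives $d_1^W(\mu_n,\mu)\to0$.

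For the two integral convergences, I decompose
$$
\int_{\bP_m}X\,d\mu_n(X)=\int_{K_n}X\,d\mu(X)+\mu(\bP_m\setminus K_n)\,I,
$$
and similarly for $X^{-1}$. Because $\|X\|$ and $\|X^{-1}\|$ are both $\mu$-integrable by \eqref{F-4.1}, dominated convergence yields $\int_{K_n}X\,d\mu(X)\to\int_{\bP_m}X\,d\mu(X)$ and $\int_{K_n}X^{-1}\,d\mu(X)\to\int_{\bP_m}X^{-1}\,d\mu(X)$ (componentwise, or equivalently in operator norm). The trailing $\mu(\bP_m\setminus K_n)\,I$ terms vanish in the limit.

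The only nonroutine step is the coupling estimate for $d_1^W$; putting the discarded mass at the single point $I$ (rather than renormalizing $\mu|_{K_n}$) is exactly what makes the extra transport cost equal to $\int_{\bP_m\setminus K_n}\|\log X\|_2\,d\mu(X)$, which in turn is controlled by the hypothesis \eqref{F-4.1} through Lemma \ref{L1}. Everything else is dominated convergence.
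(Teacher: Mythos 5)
Your proof is correct, and the construction of $\mu_n$ (truncation to the shells $\Sigma_n=\{X:(1/n)I\le X\le nI\}$ with the discarded mass placed at $\delta_I$) is exactly the one used in the paper; the integral convergences are handled identically by dominated convergence under \eqref{F-4.1}. Where you genuinely diverge is the proof that $d_1^W(\mu_n,\mu)\to0$: the paper delegates this to the argument in Theorem \ref{T:mono}, which invokes the general criterion from Villani that weak convergence together with convergence of the first moments $\int\|\log X\|_2\,d\mu_n\to\int\|\log X\|_2\,d\mu$ implies convergence in $d_1^W$. You instead build the explicit coupling $\pi_n$ pushing $X$ to $(X,X)$ on $\Sigma_n$ and to $(X,I)$ off it, giving the quantitative bound $d_1^W(\mu_n,\mu)\le\int_{\bP_m\setminus\Sigma_n}\|\log X\|_2\,d\mu(X)\to0$. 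This is more elementary and self-contained (no appeal to the weak-convergence characterization of Wasserstein convergence), and it only needs $\|\log X\|_2\in L^1(\mu)$, i.e.\ $\mu\in\cP^1$ --- so your detour through Lemma \ref{L1} with $r=1$ is harmless but not even necessary, since \eqref{F-4.1} already gives $\mu\in\cP^1$ by Lemma \ref{L3}. The paper's route has the mild advantage of reusing machinery already set up for Theorem \ref{T:mono}; yours has the advantage of an explicit transport-cost estimate.
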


\begin{proof}
For each $n\in\bN$ let $\Sigma_n$ be as in the proof of Theorem \ref{T:mono} and
define $\mu_n\in\cP_c(\bP_m)$ as
$$
\mu_n:=\mu|_{\Sigma_n}+\mu(\bP_m\setminus\Sigma_n)\delta_{I}.
$$
We then have $d_1^W(\mu_n,\mu)\to0$  as in the proof of Theorem
\ref{T:mono}, since $\mu_n$ converges weakly to $\mu$ and
$$
\int_{\bP_m}\|\log X\|_2\,d\mu_n(X)
=\int_{\Sigma_n}\|\log X\|_2\,d\mu(X)\longrightarrow
\int_{\bP_m}\|\log X\|_2\,d\mu(X)
$$
as $n\to\infty$. On the other hand, by assumption \eqref{F-4.1} we have
$$
\int_{\bP_m}X\,d\mu_n(X)=\int_{\Sigma_n}X\,d\mu(X)+\mu(\bP_m\setminus\Sigma_n)I
\longrightarrow\int_{\bP_m}X\,d\mu(X),
$$
and similarly $\int_{\bP_m}X^{-1}\,d\mu_n(X)\to\int_{\bP_m}X^{-1}\,d\mu(X)$.
\end{proof}

The following {\it AGH $($arithmetic-geometric-harmonic$)$ mean
inequalities} were shown for $\mu\in\cP_0(\bP_m)$ in \cite[Theorem
2]{Ya2} and extended in \cite{KL} to the case of
$\mu\in\cP_c(\bP_m)$. We further extend it to the case of $\mu$
satisfying \eqref{F-4.1}.

\begin{proposition}[AGH inequalities]\label{P5}
If $\mu\in\cP(\bP_m)$ satisfies \eqref{F-4.1}, then
\begin{equation}\label{F-4.8}
\biggl(\int_{\bP_m}X^{-1}\,d\mu(X)\biggr)^{-1}\le G(\mu)\le
\int_{\bP_m}X\,d\mu(X).
\end{equation}
\end{proposition}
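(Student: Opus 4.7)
The plan is to reduce the general case to the already known compactly supported case by the approximation scheme from Lemma \ref{L4}, and then pass to the limit in all three quantities appearing in \eqref{F-4.8}. The integrability condition \eqref{F-4.1} has been engineered precisely so that this approximation respects both the Cartan barycenter (via $d_1^W$-convergence and the Fundamental Contraction Property) and the arithmetic/harmonic means (via norm convergence of matrix integrals).

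First, I would invoke Lemma \ref{L4} to produce a sequence $\mu_n\in\cP_c(\bP_m)$ with $d_1^W(\mu_n,\mu)\to0$, $\int_{\bP_m}X\,d\mu_n(X)\to\int_{\bP_m}X\,d\mu(X)$, and $\int_{\bP_m}X^{-1}\,d\mu_n(X)\to\int_{\bP_m}X^{-1}\,d\mu(X)$ in the operator norm on $\bH_m$. Since each $\mu_n$ is compactly supported, the AGH inequalities already established in \cite{KL} yield
$$
\biggl(\int_{\bP_m}X^{-1}\,d\mu_n(X)\biggr)^{-1}\le G(\mu_n)\le\int_{\bP_m}X\,d\mu_n(X)
$$
for each $n$.

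Next, I would show that each of the three expressions converges in norm to its counterpart for $\mu$. The arithmetic mean side is immediate from the second conclusion of Lemma \ref{L4}. For the barycenter, the Fundamental Contraction Property (Theorem \ref{T:ft}) gives $d(G(\mu_n),G(\mu))\le d_1^W(\mu_n,\mu)\to0$, and since the trace metric and Euclidean topologies on $\bP_m$ agree, $G(\mu_n)\to G(\mu)$ in norm. For the harmonic side, continuity of matrix inversion on $\bP_m$ gives $\bigl(\int X^{-1}d\mu_n\bigr)^{-1}\to\bigl(\int X^{-1}d\mu\bigr)^{-1}$; here one should note briefly that the limit $\int X^{-1}d\mu$ is actually positive definite (not merely positive semidefinite), which follows from the fact that $\|(\int X^{-1}d\mu_n)^{-1}\|$ is uniformly bounded by $\|\int X d\mu_n\|$ via the compactly supported AGH, keeping the inverse well-defined in the limit.

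Finally, since the partial order $\le$ on $\bH_m$ is closed under norm limits, passing to the limit in the displayed inequality for $\mu_n$ yields \eqref{F-4.8}. The only mild obstacle is justifying that the harmonic mean of $\mu$ is indeed positive definite so that its inverse makes sense; this is handled by the uniform bound just described, or equivalently by observing that $\int X^{-1}d\mu\le\bigl(\int X d\mu\bigr)^{-1}$ need not be used — positive definiteness is automatic because \eqref{F-4.1} guarantees $X^{-1}$ is $\mu$-integrable and each $X^{-1}$ is positive definite, so the integral is positive definite as well.
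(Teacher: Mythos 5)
Your proposal is correct and follows essentially the same route as the paper: approximate $\mu$ by the compactly supported measures $\mu_n$ from Lemma \ref{L4}, invoke the AGH inequalities of \cite{KL} for each $\mu_n$, and pass to the limit using the Fundamental Contraction Property for the barycenter and norm convergence for the arithmetic and harmonic means. Your additional remarks on the positive definiteness of $\int_{\bP_m}X^{-1}\,d\mu(X)$ are a harmless (and correct) elaboration of details the paper leaves implicit.
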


\begin{proof}
By Lemma \ref{L4} choose a sequence $\{\mu_n\}$ in $\cP_1(\bP_m)$
such that $d_1^W(\mu_n,\mu)\to0$ (hence $G(\mu_n)\to G(\mu)$ by
Theorem \ref{T:ft}) and
$$
\int_{\bP_m}X\,d\mu_n(X)\longrightarrow\int_{\bP_m}X\,d\mu(X),\quad
\int_{\bP_m}X^{-1}\,d\mu_n(X)\longrightarrow\int_{\bP_m}X^{-1}\,d\mu(X).
$$
Since inequalities \eqref{F-4.8} hold for $\mu_n$, the result follows
by taking the limit of \eqref{F-4.8} for $\mu_n$.
\end{proof}

\begin{remark}\rm
Let $X_{ij}$ and $(X^{-1})_{ij}$ denote the $(i,j)$-entries of
$X,X^{-1}$, respectively. Then it is clear that the functions
$X\in\bP_m\mapsto X_{ij},(X^{-1})_{ij}$ are integrable with respect
to $\mu$ for all $i,j=1,\dots,m$ if and only if condition
\eqref{F-4.1} holds. Hence \eqref{F-4.1} is the best possible assumption
for the AGH mean inequalities in Proposition \ref{P5} to make sense.
\end{remark}

\begin{lemma}\label{L7}
For every $\mu\in\cP(\bP_m)$ with \eqref{F-4.1},
$$
{1\over t}\log\int_{\bP_m}X^t\,d\mu(X) \longrightarrow\int_{\bP_m}\log X\,d\mu(X),
$$
or equivalently,
$$
\left(\int_{{\Bbb
P}_{m}}X^{t}d\mu(X)\right)^{1\over t}=\exp\int_{{\Bbb P}_{m}}\log X\,d\mu(X)
$$
as $t\to0$ with $|t|\le1$.
\end{lemma}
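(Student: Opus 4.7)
The plan is to reduce the lemma to the first-order Taylor expansion
$$\int_{\bP_m}X^t\,d\mu(X) = I + t\int_{\bP_m}\log X\,d\mu(X) + o(t)\quad\text{as }t\to0,$$
since from this the first displayed formula follows by dividing by $t$ and reading off the principal matrix logarithm via its convergent power-series expansion at the identity, while the equivalent exponentiated form then follows by continuity of the matrix exponential. Thus the essential analytic task is to show
$$\int_{\bP_m}\frac{X^t-I}{t}\,d\mu(X)\ \longrightarrow\ \int_{\bP_m}\log X\,d\mu(X).$$

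The pointwise limit $(X^t-I)/t \to \log X$ as $t\to 0$ is immediate from the spectral calculus. To move the limit inside the integral I would apply dominated convergence componentwise, which requires a $\mu$-integrable dominating function for $\|(X^t-I)/t\|$ that is uniform in $|t|\le 1$. The crucial elementary estimate is
$$\Big\|\frac{X^t-I}{t}\Big\|\le \|X\|+\|X^{-1}\|,\qquad 0<|t|\le 1,$$
whose right-hand side lies in $L^1(\mu)$ by hypothesis \eqref{F-4.1}. To establish it, diagonalize $X$ so that the eigenvalues of $(X^t-I)/t$ are the scalars $(\lambda_i(X)^t-1)/t$; since $s\mapsto \lambda^s$ is convex on $\bR$, the divided difference $(\lambda^t-1)/t$ is monotone non-decreasing in $t$ on $\bR\setminus\{0\}$, so for $t\in[-1,1]\setminus\{0\}$ it lies in the interval $[1-\lambda^{-1},\,\lambda-1]$. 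In particular its absolute value is bounded by $\max(|\lambda-1|,|1-\lambda^{-1}|)\le \lambda+\lambda^{-1}\le \|X\|+\|X^{-1}\|$, giving the claimed operator-norm bound.

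With the dominating function in hand, entry-wise dominated convergence (each matrix entry is controlled by a constant multiple of the operator norm in finite dimension) yields the desired limit, and hence the expansion $\int_{\bP_m} X^t\,d\mu(X)=I+tA_t$ with $A_t\to A:=\int_{\bP_m}\log X\,d\mu(X)$. For $|t|$ small enough, $I+tA_t$ is positive definite and close enough to $I$ that the principal logarithm is given by the convergent series $\log(I+tA_t)=tA_t-\tfrac12(tA_t)^2+\cdots$, so dividing by $t$ gives $\tfrac1t\log\int_{\bP_m} X^t\,d\mu(X)\to A$. Applying $\exp$ to both sides delivers the equivalent reformulation.

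The only nontrivial point is producing the $L^1$-dominating function at the strength required by the minimal hypothesis \eqref{F-4.1}: a more naive estimate such as factoring $\|\log X\|$ out of $X^t-I=\int_0^t X^s\log X\,ds$ produces a bound of order $(\|X\|+\|X^{-1}\|)\log(\|X\|+\|X^{-1}\|)$, an $L\log L$-type quantity not controlled by $\|X\|+\|X^{-1}\|\in L^1(\mu)$. The scalar convexity-monotonicity argument for $(\lambda^t-1)/t$ is exactly what brings the bound down to $\|X\|+\|X^{-1}\|$ and makes the dominated convergence go through.
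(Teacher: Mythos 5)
Your proof is correct, and it shares the paper's overall architecture --- reduce the lemma to the first-order expansion $\int_{\bP_m}X^t\,d\mu(X)=I+t\int_{\bP_m}\log X\,d\mu(X)+o(t)$ and then read off the logarithm via the power series at $I$ --- but the mechanism by which you justify passing the limit through the integral is genuinely different. The paper expands $X^t=e^{t\log X}=I+t\log X+R(t,X)$ with $R(t,X)=\sum_{n\ge2}\frac{t^n}{n!}(\log X)^n$, and bounds the remainder by $\|R(t,X)\|\le t^2e^{\|\log X\|}\le t^2(\|X\|+\|X^{-1}\|)$ using Lemma \ref{L1}; integrating this gives a quantitative $O(t^2)$ error and hence the explicit rate $\frac1t\log\int X^t\,d\mu=\int\log X\,d\mu+O(t)$. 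You instead prove the uniform domination $\|(X^t-I)/t\|\le\|X\|+\|X^{-1}\|$ for $0<|t|\le1$ by the monotonicity in $t$ of the scalar divided difference $(\lambda^t-1)/\,t$ (convexity of $s\mapsto\lambda^s$), and then invoke dominated convergence entrywise. Both routes consume exactly the integrability hypothesis \eqref{F-4.1} and nothing more; the paper's version buys an explicit convergence rate, while yours is slightly more elementary (it needs only scalar convexity rather than the exponential-series tail estimate) and your closing observation is apt: the naive bound obtained from $X^t-I=\int_0^tX^s\log X\,ds$ would require an $L\log L$ condition strictly stronger than \eqref{F-4.1}, and it is precisely the divided-difference monotonicity (in your argument) or the cancellation of the first two series terms (in the paper's) that avoids this. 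One cosmetic point: you should note explicitly that $\int_{\bP_m}\log X\,d\mu(X)$ exists, which follows either from Lemma \ref{L1} or from Fatou applied to your own dominating bound.
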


\begin{proof}
First, note that $\int_{\bP_m}\log X\,d\mu(X)$ exists by Lemma \ref{L1}. For any $X\in\bP_m$
we write
$$
X^t=e^{t\log X}=I+t\log X+R(t,X),
$$
where
$$
R(t;X):=\sum_{n=2}^\infty{t^n\over n!}(\log X)^n.
$$
Assuming $|t|\le1$ we have
$$
\|R(t,X)\|\le t^2\sum_{n=0}^\infty{1\over n!}\,\|\log X\|^n
=t^2e^{\|\log X\|} \le t^2\bigl(\|X\|+\|X^{-1}\|\bigr)
$$
by Lemma \ref{L1}. Therefore,
$$
\int_{\bP_m}\|R(t,X)\|\,d\mu(X)\le
t^2\int_{\bP_m}\bigl(\|X\|+\|X^{-1}\|\bigr)\,d\mu(X)
=O(t^2)\quad\mbox{as $t\to0$},
$$
so that we have
$$
\int_{\bP_m}X^t\,d\mu(X)=I+t\int_{\bP_m}\log X\,d\mu(X)+O(t^2).
$$
This implies that
$$
{1\over t}\log\int_{\bP_m}X^t\,d\mu(X) =\int_{\bP_m}\log
X\,d\mu(X)+O(t),
$$
and hence
$$
\lim_{t\to0}{1\over t}\log\int_{\bP_m}X^t\,d\mu(X)=\int_{\bP_m}\log
X\,d\mu(X).
$$
\end{proof}

Finally, for $\mu\in\cP(\bP_m)$ we consider the condition
\begin{equation}\label{F-4.9}
\int_{\bP_m}\bigl(\|X\|+\|X^{-1}\|\bigr)^r\,d\mu(X)<\infty
\end{equation}
for some $r>0$. It is obvious that if \eqref{F-4.9} holds for $r>0$,
then it also holds for any $r'\in(0,r]$. Moreover, for any $r>0$, condition
\eqref{F-4.9} is equivalent to
$$
\int_{\bP_m}\bigl(\|X^r\|+\|X^{-r}\|\bigr)\,d\mu(X)<\infty,
$$
so that both $\mu^r$ and $\mu^{-r}$ satisfy \eqref{F-4.1}.

Our main result of this section is the following:

\begin{theorem}[Lie-Trotter formula]\label{T8}
Let  $\mu\in\cP(\bP_m)$ satisfying \eqref{F-4.9} for some $r>0$. Then
we have
\begin{equation}\label{F-4.10}
\lim_{t\to0}G(\mu^t)^{1\over t}=\exp\int_{\bP_m}\log X\,d\mu(X).
\end{equation}
\end{theorem}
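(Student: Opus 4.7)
The plan is to sandwich $G(\mu^t)$ between the arithmetic and harmonic means of $\mu^t$ via Proposition \ref{P5}, take the operator logarithm (which is operator monotone on $(0,\infty)$), divide by $t$, squeeze using Lemma \ref{L7}, and finally exponentiate to recover $G(\mu^t)^{1/t}$.

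First I would verify that $\mu^t$ satisfies the first-moment condition \eqref{F-4.1} whenever $0<|t|\le r$. Since $\mu^t$ is the push-forward of $\mu$ under $X\mapsto X^t$,
\begin{equation*}
\int_{\bP_m}\bigl(\|Y\|+\|Y^{-1}\|\bigr)\,d\mu^t(Y)
=\int_{\bP_m}\bigl(\|X\|^{|t|}+\|X^{-1}\|^{|t|}\bigr)\,d\mu(X)
\le 2\int_{\bP_m}\bigl(\|X\|+\|X^{-1}\|\bigr)^r\,d\mu(X)<\infty,
\end{equation*}
where the middle inequality uses $\|X\|+\|X^{-1}\|\ge 2$ and $|t|\le r$, and finiteness comes from \eqref{F-4.9}. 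Thus Proposition \ref{P5} applies to $\mu^t$ and gives the AGH sandwich
\begin{equation*}
\Biggl(\int_{\bP_m}X^{-t}\,d\mu(X)\Biggr)^{-1}\le G(\mu^t)\le\int_{\bP_m}X^t\,d\mu(X).
\end{equation*}

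Operator monotonicity of $\log$ transforms this into the additive sandwich
\begin{equation*}
-\log\int_{\bP_m}X^{-t}\,d\mu(X)\le\log G(\mu^t)\le\log\int_{\bP_m}X^t\,d\mu(X),
\end{equation*}
and dividing by $t$ (preserving the inequalities for $t>0$, reversing them for $t<0$) sandwiches $(1/t)\log G(\mu^t)$ between $(1/t)\log\int X^t\,d\mu$ and $(1/(-t))\log\int X^{-t}\,d\mu$. Applying Lemma \ref{L7} once with parameter $t$ and once with parameter $-t$, both outer terms converge to $\int_{\bP_m}\log X\,d\mu(X)$ as $t\to0$; the squeeze forces the same limit for $(1/t)\log G(\mu^t)$. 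Since $G(\mu^t)^{1/t}=\exp\bigl((1/t)\log G(\mu^t)\bigr)$ by functional calculus and $\exp$ is continuous on $\bH_m$, formula \eqref{F-4.10} follows.

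The one nontrivial wrinkle is that Lemma \ref{L7} as stated requires $\mu$ itself to satisfy \eqref{F-4.1}, which is automatic when $r\ge1$ but may fail when $r<1$. In that case I would apply Lemma \ref{L7} not to $\mu$ but to $\nu:=\mu^r$---which does satisfy \eqref{F-4.1} by the same push-forward computation as above---and transfer the conclusion back to $\mu$ via the identities $\int X^t\,d\mu=\int Y^{t/r}\,d\nu(Y)$ and $\int\log X\,d\mu=(1/r)\int\log Y\,d\nu(Y)$, rescaling the parameter as $s=t/r$. This reduction is the main technical obstacle in the argument and is precisely where the full strength of hypothesis \eqref{F-4.9} is used; once it is in place the rest of the proof is the routine squeeze described above, and no appeal to the log-majorization of Section 4 is needed for the pointwise limit.
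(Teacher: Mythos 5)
Your proposal is correct and follows essentially the same route as the paper's own proof: the AGH sandwich of Proposition \ref{P5} applied to $\mu^t$, operator monotonicity of $\log$, the squeeze via Lemma \ref{L7}, and the rescaling reduction to $\mu^r$ when $r<1$ (the paper organizes this slightly differently, first proving the case where $\mu$ itself satisfies \eqref{F-4.1} and then reducing the general case to it, but the substance is identical). Your observation that the log-majorization of Section 4 is not needed for the pointwise limit also matches the paper.
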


\begin{proof}
First, assume that $\mu$ satisfies \eqref{F-4.1}. For any
$t\in[-1,1]\setminus\{0\}$, by using Proposition \ref{P5} to $\mu^t$ we have
\begin{align*}
\biggl(\int_{\bP_m}X^{-t}\,d\mu(X)\biggr)^{-1}&=
\biggl(\int_{\bP_m}X^{-1}\,d\mu^t(X)\biggr)^{-1} \\
&\le G(\mu^t)\le\int_{\bP_m}X\,d\mu^t(X)=\int_{\bP_m}X^t\,d\mu(X).
\end{align*}
Since $\log x$ is operator monotone on $(0,\infty)$, the above
inequalities give
\begin{align*}
&-{1\over t}\log\int_{\bP_m}X^{-t}\,d\mu(X)
\le\log G(\mu^t)^{1\over t}\le{1\over t}\log\int_{\bP_m}X^t\,d\mu(X)
\quad\mbox{if $0<t\le1$}, \\
&-{1\over t}\log\int_{\bP_m}X^{-t}\,d\mu(X) \ge\log
G(\mu^t)^{1\over t}\ge{1\over t}\log\int_{\bP_m}X^t\,d\mu(X)
\quad\mbox{if $-1\le t<0$}.
\end{align*}
From Lemma \ref{L7} this implies that
\begin{equation}\label{F-4.11}
\lim_{t\to0}\log G(\mu^t)^{1\over t}=\int_{\bP_m}\log X\,d\mu(X).
\end{equation}

Next, assume that $\mu$ satisfies \eqref{F-4.9} for some $r>0$, that
is, $\mu^r$ satisfies \eqref{F-4.1}. The above case yields
$$
\lim_{t\to0}\log G\bigl((\mu^r)^t\bigr)^{1\over t}=\int_{\bP_m}\log X\,d\mu^r(X).
$$
Note that the left-hand side in the above is
$$
\lim_{t\to0}\log G(\mu^{rt})^{1\over t}=r\lim_{t\to0}\log G(\mu^t)^{1\over t},
$$
while the right-hand side is
$$
r\int_{\bP_m}\log X\,d\mu(X).
$$
Hence we have \eqref{F-4.11} again, which implies \eqref{F-4.10}.
\end{proof}

The next corollary extends \cite[Corollary 2]{BG} to the case of probability measures
satisfying \eqref{F-4.9}.

\begin{corollary}\label{C9}
Assume that $\mu\in\cP(\bP_m)$ satisfies \eqref{F-4.9} for an $r>0$
and $|||\cdot|||$ is any unitarily invariant norm. Then
\begin{itemize}
\item[(a)] For every $t>0$,
\begin{equation}\label{F-4.12}
\big|\big|\big|G(\mu^{-t})^{-{1\over t}}\big|\big|\big|
=\big|\big|\big|G(\mu^t)^{1\over t}\big|\big|\big|
\le\bigg|\bigg|\bigg|\exp\int_{\bP_m}\log X\,d\mu(X)\bigg|\bigg|\bigg|,
\end{equation}
and $\big|\big|\big|G(\mu^t)^{1\over t}\big|\big|\big|$
increases to $\big|\big|\big|\exp\int_{\bP_m}\log X\,d\mu(X)\big|\big|\big|$
as $t\searrow0$.
\item[(b)] If $0<t\le r$, then
\begin{align}
&\bigg|\bigg|\bigg|\left(\int_{{\Bbb P}_{m}}X^{-t}
\,d\mu(X)\right)^{-{1\over t}}\bigg|\bigg|\bigg|
\le\big|\big|\big|G(\mu^t)^{1\over t}\big|\big|\big| \nonumber\\
&\quad\leq\bigg|\bigg|\bigg|\exp\int_{\bP_m}\log
X\,d\mu(X)\bigg|\bigg|\bigg|\leq
\bigg|\bigg|\bigg|\left(\int_{{\Bbb P}_{m}}X^{t}
\,d\mu(X)\right)^{1\over t}\bigg|\bigg|\bigg|. \label{F-4.13}
\end{align}
Furthermore,
$\big|\big|\big|\bigl(\int_{{\Bbb P}_{m}}X^{t}\,d\mu(X)\bigr)^{1\over t}\big|\big|\big|$
decreases to $\big|\big|\big|\exp\int_{\bP_m}\log X\,d\mu(X)\big|\big|\big|$ and
$\big|\big|\big|\bigl(\int_{{\Bbb P}_{m}}X^{-t}\,d\mu(X)\bigr)^{-{1\over t}}\big|\big|\big|$
increases to $\big|\big|\big|\exp\int_{\bP_m}\log X\,d\mu(X)\big|\big|\big|$ as
$r\ge t\searrow0$.
\end{itemize}
\end{corollary}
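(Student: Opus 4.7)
The plan for part (a) is to first establish the matrix identity $G(\mu^{-t})^{-1/t}=G(\mu^t)^{1/t}$. I would prove the auxiliary fact $G(\nu^{-1})=G(\nu)^{-1}$ for every $\nu\in\cP^1(\bP_m)$ directly from Theorem \ref{T:kare}: setting $Z=G(\nu)$, the Karcher equation for $\nu^{-1}$ at $W=Z^{-1}$ (obtained by pushing forward under $X\mapsto X^{-1}$) reads $\int\log(Z^{1/2}X^{-1}Z^{1/2})\,d\nu(X)=0$; since $Z^{1/2}X^{-1}Z^{1/2}$ and $Z^{-1/2}XZ^{-1/2}$ are mutual inverses, the integrand equals $-\log(Z^{-1/2}XZ^{-1/2})$, and the equation reduces to the negative of the Karcher equation for $\nu$ at $Z$, which is satisfied. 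Applying this to $\nu=\mu^t$ and using $(\mu^t)^{-1}=\mu^{-t}$ yields $G(\mu^{-t})=G(\mu^t)^{-1}$ and hence the norm equality. For the remaining bound and the monotone increase as $t\searrow0$, I would combine the fact that $t\mapsto|||G(\mu^t)^{1/t}|||$ is nonincreasing on $(0,\infty)$ (equation \eqref{E:LL}, available since $\mu\in\cP^1(\bP_m)$ by Lemma \ref{L3}) with the Lie-Trotter convergence of Theorem \ref{T8} and continuity of the norm; the limit coincides both with the supremum of the monotone quantity and with the norm of the Lie-Trotter limit.

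For part (b), I would first check that when $0<t\le r$ the measure $\mu^t$ satisfies \eqref{F-4.1}, so Proposition \ref{P5} applied to $\mu^t$ gives the L\"owner-order sandwich $(\int_{\bP_m}X^{-t}\,d\mu)^{-1}\le G(\mu^t)\le\int_{\bP_m}X^t\,d\mu$. The elementary observation that $A\le B$ in $\bP_m$ implies $\lambda_j(A^{1/t})=\lambda_j(A)^{1/t}\le\lambda_j(B)^{1/t}=\lambda_j(B^{1/t})$ for every $j$ (Weyl monotonicity plus monotonicity of $x^{1/t}$) yields a pointwise eigenvalue domination, hence a norm inequality for every unitarily invariant norm; this produces the first and third inequalities of \eqref{F-4.13}, while the second is just part (a). For the rightmost inequality, I would invoke the operator Jensen inequality $f(\int X\,d\nu)\le\int f(X)\,d\nu$ (in L\"owner order) with the operator convex function $f=-\log$ and $\nu=\mu^t$: this yields $\log\bigl((\int X^t\,d\mu)^{1/t}\bigr)\ge\int\log X\,d\mu$, and then eigenvalue comparison together with $\lambda_j(\log A)=\log\lambda_j(A)$ gives $\lambda_j\bigl((\int X^t\,d\mu)^{1/t}\bigr)\ge\lambda_j(\exp\int\log X\,d\mu)$, producing the desired norm inequality.

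The monotonicity of $b(t):=|||(\int X^t\,d\mu)^{1/t}|||$ would come from operator Jensen applied this time to the operator concave function $x\mapsto x^{s/t}$ (exponent in $(0,1)$ for $0<s<t\le r$) again on $\mu^t$: the resulting L\"owner-order inequality $(\int X^t\,d\mu)^{s/t}\ge\int X^s\,d\mu$ upon raising to the $1/s$ power via pointwise eigenvalue comparison gives $b(s)\le b(t)$. Combined with the convergence $(\int X^t\,d\mu)^{1/t}\to\exp\int\log X\,d\mu$ as $t\searrow0$, obtained by applying Lemma \ref{L7} to $\mu^r$ together with the change of variable $t\mapsto rt$ exactly as in the proof of Theorem \ref{T8}, this gives $b(t)\searrow|||\exp\int\log X\,d\mu|||$. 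The dual statement for $|||(\int X^{-t}\,d\mu)^{-1/t}|||$ follows from the same machinery applied to $\mu^{-1}$ (which also satisfies \eqref{F-4.9}): writing $N_t:=\int X^{-t}\,d\mu$, operator Jensen for $\mu^{-1}$ produces the pointwise eigenvalue inequality $\lambda_j(N_s^{1/s})\le\lambda_j(N_t^{1/t})$ for $s<t$, which after inversion (which reverses the decreasing ordering of eigenvalues of a positive matrix) becomes $\lambda_j(N_s^{-1/s})\ge\lambda_j(N_t^{-1/t})$ and hence the reversed norm inequality for the $-1/t$ powers. The main obstacle I anticipate is the precise invocation of operator Jensen's inequality for arbitrary probability measures on $\bP_m$ rather than for the discrete convex combinations of the classical Hansen-Pedersen form; the extension is standard but requires approximating $\mu^t$ by finitely supported measures in $d_1^W$ and verifying that the relevant Bochner integrals converge under the moment condition \eqref{F-4.1}. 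Secondary bookkeeping involves the straightforward passage from \eqref{F-4.9} to \eqref{F-4.1} for $\mu^t$ and careful tracking of the admissible exponent ranges in the various applications of Lemma \ref{L7}.
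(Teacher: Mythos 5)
Your proposal is correct and follows essentially the same route as the paper: inversion invariance $G(\nu^{-1})=G(\nu)^{-1}$ from the Karcher equation plus \eqref{E:LL} and Theorem \ref{T8} for (a), and for (b) the AGH inequalities applied to $\mu^t$, operator concavity of $x^{s/t}$ implemented by simple-function approximation, eigenvalue comparison, and Lemma \ref{L7} applied to $\mu^r$. The only cosmetic difference is that you obtain the rightmost inequality of \eqref{F-4.13} directly from an operator Jensen inequality for $-\log$, whereas the paper gets it as the limit of the monotone family $\bigl(\int X^t\,d\mu\bigr)^{1/t}$; both are valid and rest on the same approximation machinery.
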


\begin{proof}
When $\mu\in\cP^1(\bP_m)$ (without condition \eqref{F-4.9}), from the invariance
$G(\mu^{-1})=G(\mu)^{-1}$ as immediately seen from Theorem \ref{T:kare},
we find that $G(\mu^{-t})^{-{1\over t}}=G(\mu^t)^{1\over t}$,
implying the equality in \eqref{F-4.12}. It follows from \eqref{E:LL} that
$\big|\big|\big|G(\mu^t)^{1\over t}\big|\big|\big|$ is increasing as $t\searrow0$.
In the rest, assume \eqref{F-4.9} for an $r>0$.

(a)\enspace
The inequality in \eqref{F-4.12} is immediately seen from Theorem \ref{T8} together with
$\big|\big|\big|G(\mu^t)^{1\over t}\big|\big|\big|$ being increasing noted above.

(b)\enspace
Assume that $0<t'<t\le r$ and prove that
\begin{align}
\int_{\bP_m}X^{t'}\,d\mu(X)
&\le\biggl(\int_{\bP_m}X^t\,d\mu(X)\biggr)^{t'\over t}, \label{F-4.14}\\
\biggl(\int_{\bP_m}X^{-t'}\,d\mu(X)\biggr)^{-1}
&\ge\biggl(\int_{\bP_m}X^{-t}\,d\mu(X)\biggr)^{-{t'\over t}}. \label{F-4.15}
\end{align}
For each $n\in\bN$ let $\Sigma_n$ be as in the proof of Lemma \ref{L4}.
Since $X^t$ and $X^{t'}$ are uniformly continuous on the compact set $\Sigma_n$, one can
choose a sequence of simple functions $\sum_{j=1}^{k_\ell}A_{\ell,j}1_{\cQ_{\ell,j}}$,
$\ell\in\bN$, with $A_{\ell,j}\in\Sigma_n$ and Borel partitions
$\{\cQ_{\ell,j}\}_{j=1}^{k_\ell}$ of $\Sigma_n$ such that, as $\ell\to\infty$,
$$
\sum_{j=1}^{k_\ell}A_{\ell,j}^t\mu(\cQ_{\ell,j})\longrightarrow
\int_{\Sigma_n}X^t\,\mu(X),\qquad
\sum_{j=1}^{k_\ell}A_{\ell,j}^{t'}\mu(\cQ_{l,j})\longrightarrow
\int_{\Sigma_n}X^{t'}\,\mu(X).
$$
Due to the operator concavity of $x^{t'/t}$ on $(0,\infty)$, we have
$$
\sum_{j=1}^{k_\ell}\mu(\cQ_{\ell,j})A_{\ell,j}^{t'}+\mu(\bP_m\setminus\Sigma_n)I
\le\Biggl(\sum_{j=1}^{k_\ell}\mu(\cQ_{\ell,j})A_{\ell,j}^t
+\mu(\bP_m\setminus\Sigma_n)I\Biggr)^{t'\over t}.
$$
Letting $l\to\infty$ gives
$$
\int_{\Sigma_n}X^{t'}\,d\mu(X)+\mu(\bP_m\setminus\Sigma_n)I \le
\biggl(\int_{\Sigma_n}X^t\,d\mu(X)+\mu(\bP_m\setminus\Sigma_n)I\biggr)^{t'\over t}.
$$
Since $\|X^t\|$ and $\|X^{t'}\|$ are integrable with respect to $\mu$, \eqref{F-4.14}
follows by taking the limit of the above inequality as $n\to\infty$. Then, \eqref{F-4.15}
also follows by replacing $\mu$ with $\mu^{-1}$ in \eqref{F-4.14}. Now, similarly to
the proof of \cite[Theorem~1]{BG} we see that for $1\le j\le m$, as $r\ge t\searrow0$,
the $j$th eigenvalue of $\bigl(\int_{\bP_m}X^t\,d\mu(X)\bigr)^{1\over t}$ is decreasing
and that of $\bigl(\int_{\bP_m}X^{-t}\,d\mu(X)\bigr)^{-{1\over t}}$ is increasing.

Furthermore, by applying Lemma \ref{L7} to $\mu^r$ we have
$$
\biggl(\int_{\bP_m}X^t\,d\mu^r(X)\biggr)^{1\over t}\ \longrightarrow
\ \exp\int_{\bP_m}\log X\,d\mu^r(X)\quad\mbox{as $t\to0$ with $|t|\le1$},
$$
which is rephrased as
$$
\biggl(\int_{\bP_m}X^t\,d\mu(X)\biggr)^{1\over t}\ \longrightarrow
\ \exp\int_{\bP_m}\log X\,d\mu(X)\quad\mbox{as $t\to0$ with $|t|\le r$}.
$$
Hence, as $r\ge t\searrow0$,
$\big|\big|\big|\bigl(\int_{{\Bbb P}_{m}}X^{t}\,d\mu(X)\bigr)^{1\over t}\big|\big|\big|$
decreases to $\big|\big|\big|\exp\int_{\bP_m}\log X\,d\mu(X)\big|\big|\big|$ while
$\big|\big|\big|\bigl(\int_{{\Bbb P}_{m}}X^{-t}\,d\mu(X)\bigr)^{-{1\over t}}\big|\big|\big|$
increases to the same. In view of (a) it remains to show the first inequality in
\eqref{F-4.13}. But this is immediately seen by applying \eqref{F-4.8} to $\mu^t$ for
$0<t\le r$.
\end{proof}

\begin{remark}\rm
The following example shows that condition \eqref{F-4.9} is not satisfied for any $r>0$
even if we have $\mu\in\cP^p(\bP_m)$ for all $p>0$. For instance, choose $X_n\in\bP_m$
such that $X_n\ge I$ and $\|X_n\|=n^n$, and define
$$
\mu:=\sum_{n=1}^\infty{1\over2^n}\,\delta_{X_n}.
$$
Then, for any $r>0$,
$$
\int_{\bP_m}\|X\|^r\,d\mu(X)=\sum_{n=1}^\infty{(n^r)^n\over2^n}=\infty,
$$
while
$$
\int_{\bP_m}\|\log X\|_2^p\,d\mu(X)
\le\sum_{n=1}^\infty{\bigl(m\log^2\|X_n\|\bigr)^{p/2}\over2^n}
=m^{p/2}\sum_{n=1}^\infty{(n\log n)^p\over2^n}<\infty
$$
for all $p>0$.
\end{remark}

\begin{problem}\rm
Do Theorem \ref{T8} and part (a) of Corollary \ref{C9} hold for
general $\mu\in\cP^1(\bP_m)$ without assumption \eqref{F-4.9}? In
part (b) of Corollary \ref{C9}, we cannot define $\int_{\bP_m}X^{\pm
t}\,d\mu(X)$ for general $\mu\in\cP^1$, while part (a) makes sense
for general $\mu\in\cP^1$.
\end{problem}

\subsection*{Acknowledgments}
 The
authors thank Hiroyuki Osaka and Takeaki Yamazaki  for inviting the
workshop on Quantum Information Theory and Related Topics 2016 in
Ritsumeikan University  where this work was initiated. The work of
F.~Hiai was supported by Grant-in-Aid for Scientific Research
(C)26400103. The work of Y.~Lim was supported by the National
Research Foundation of Korea (NRF) grant funded by the Korea
government(MEST) No.2015R1A3A2031159 and 2016R1A5A1008055.

\end{document}